\date{}
\newtheorem{theorem}{Theorem}[section]
\newtheorem{definition}[theorem]{Definition}
\newtheorem{lemma}[theorem]{Lemma}
\newtheorem{remark}[theorem]{Remark}
\newtheorem{thm}{Theorem}[section]
\newtheorem{lem}[thm]{Lemma}
\newtheorem{cor}[thm]{Corollary}
\numberwithin{equation}{section}
\begin{document}
\setlength{\unitlength}{1cm}



\vskip1.5cm

 \centerline { \textbf{Asymptotics of  the eigenvalues and Abel basis property of the root     }}
 \centerline { \textbf{functions of new type Sturm-Liouville problems}}

\vskip.2cm


\vskip.8cm \centerline {\textbf{O. Sh. Mukhtarov$^{a,b}$, K.
Aydemir$^{c}$ and S. Y. Yakubov}$^{d}$}

\vskip.5cm

\centerline {$^a$Department of Mathematics, Faculty of Science,}
\centerline {Gaziosmanpa\c{s}a University,
 60250 Tokat, Turkey}
 \centerline {$^{b}$Institute of Mathematics and Mechanics, }
 \centerline { Azerbaijan National Academy of Sciences, Baku, Azerbaijan}
 \centerline {$^c$Department of Mathematics, Faculty of Art and Science,}
\centerline { Amasya University,05100, Amasya, Turkey} \centerline
{$^d$Department of Mathematics and Computer Science,,} \centerline {
University of Haifa, Haifa 31905, Israel}
 \centerline
{e-mail : {\tt omukhtarov@yahoo.com, kadriyeaydemr@gmail.com,
 }}
 \centerline{ rsmaf06@mathcs.haifa.ac.il }


\vskip.5cm \hskip-.5cm{\small{\bf Abstract :}This work investigates
spectrum and root functions(that is, eigen- and associated
functions) of a Sturm-Liouville problem involving an abstract linear
operator(nonselfadjoint in general) in the equation together with
supplementary transmission conditions at the one interior singular
point. So, the problem under consideration is not pure differential
problem. At first we establish isomorphism and coerciveness with
respect to the spectral parameter for the corresponding
nonhomogeneous problem. Then by suggesting an own our method we
prove that the spectrum of the considered problem is discrete and
derive an asymptotic approximation formulas for the eigenvalues. We
must note that Asymptotics of the eigenvalues of such type problems
is investigated at first in literature in the present work and the
obtained results are new even  in the continuous  case(i.e. without
transmission conditions). Finally it is shown that the system of
root functions form an Abel basis of the corresponding Hilbert
space.
 \vskip0.3cm\noindent {\bf Keywords :}
Sturm-Liouville problems, spectrum,  transmission conditions, Abel
basis, eigenvaules, eigen- and associated functions.

 \vskip0.3cm\noindent {\bf MSC 2010 :}        46E35, 34B24, 34L10

\section{\textbf{Introduction}}
Self-adjoint boundary value problems(BVPs, for short) are of
significant importance in many models of applied mathematics and
quantum mechanics in spherical and cylindrical geometries. Among
these BVPs, the Sturm-Liouville problems is a typical one. Many
physical processes, such as the vibration of strings, the
interaction of atomic particles, electrodynamics of complex medium,
aerodynamics, polymer rheology  or the earth's free oscillations
yields Sturm-Liouville eigenvalue problems(see, for example, \cite{
ges, kon, pe, pr, ro, she,tik} and references cited therein).
Generally, the separation of variables method was applied on the
two-order partial differential equation to obtain a Sturm-Liouville
problem for each independent variable. This method is a cornerstone
in the study of partial differential equations, and is a major
element in physical problems. For example, consider a boundary value
problem for the one-dimensional wave equation
 \begin{eqnarray*}
&&\rho_0u_{tt}=(ku_{x})_{x}, \ \ \ 0\leq x \leq L, \  \nonumber  \\
&&u(0,t)=u(L,t)=0, \ \
\end{eqnarray*}
for the longitudinal displacement $u(x; t)$ of a string of length
$L$ with mass-density $\rho_0(x)$ and stiffness $k(x)$, both of
which we assume are smooth, strictly positive functions on $0\leq x
\leq L$. Looking for separable time-periodic solutions  we get
Sturm-Liouville problem
 \begin{eqnarray*}
-(k\varphi')'=\lambda \rho_0\varphi, \ \ \varphi(0)=\varphi(L)=0.
\end{eqnarray*}
Sturm Liouville theory was developed collaboratively by
Charles-Franciois Sturm (1803- 1855) and Joseph Liouville
(1809-1882) in order to generalise a relatively disorganised array
of second order linear differential equations used to model physical
problems. These included Bernoulli's work on vibrating strings and
Liouville's own work on heat conduction \cite{lut}. In 1910 Hermann
Weyl \cite{wey} gave the first rigorous treatment, in the case of an
equation of Sturm-Liouville type, of cases where continuous spectra
can occur. The theory was particularly significant because it
provided the first qualitative theory of differential equations, and
was thus very useful for solutions that could not be solved
explicitly. These problems involve self-adjoint (differential)
operators which play an important role in the spectral theory of
linear operators and the existence of the eigenfunctions.  The
development of classical, rather than the operatoric,
Sturm-Liouville theory in the years after 1950 can be found in
various sources; in particular in the texts of Atkinson \cite{at},
Coddington and Levinson \cite{co}, Levitan and Sargsjan \cite{le}
and Naimark \cite{na}.  Spectral problems associated with
differential operators having only a discrete spectrum and depending
polynomially on the spectral parameter have been considered by
Gohberg and Krein \cite{goh}, and by Keldysh \cite{kel}. They
studied the spectrum and principal functions of such problems and
showed the completeness of the principal functions in the
corresponding Hilbert function space. There are a lot of studies
about the spectrum of such operators \cite{ag,ak}.  For the
background and applications of the boundary value problems to
different areas, we refer the reader to the monographs and some
recent contributions as \cite{ ag,  al2, e.t1, ka2, eba1, bi1, bo,
ch, ism, kan, ker, mak,  os1, ka2y, titc}. Note that in recent
years, there has been growing interest in boundary- value problems
with interior singularities(see, for example, \cite{ka2, eba1, bo,
ch, fatma, os1, tit} and references cited therein).

 In this study we
shall investigate some spectral aspects of a new type
Sturm-Liouville equation involving an abstract linear operator $A$,
namely  the "differential" equation
\begin{equation}\label{1}
Lu\equiv p(x)u^{\prime \prime }+ Au=\lambda u \ \
x\in[-1,0)\cup(0,1]
\end{equation}
together with boundary conditions at the end-points $x=-1,1$ given
by
\begin{equation}\label{2}
L_1u\equiv\alpha_0 u(-1)+\alpha_1 u'(-1)=0
\end{equation}
\begin{equation}\label{3}
L_2u\equiv\beta_0 u(1)+\beta_1 u'(1)=0
\end{equation}
and the transmission conditions at the point of singularity $x=0$
given by
\begin{equation}\label{4}
u'(0^{-})=\gamma_{0}u(0^{-})+\delta_{0}u(0^{+})=0
\end{equation}
\begin{equation}\label{5}
 u'(0^{+})=\gamma_{1}u(0^{-})+\delta_{1}u(0^{+})=0
\end{equation}
where $p(x)=p_1$ for $x\in [-1,0),$ $p(x)=p_2$ for $x\in (0,1]$;
$p_{i}, \alpha_{i}, \beta_{ij}, \delta_{i}, \gamma_{i} \ (i=0,1)$
are real numbers; $p_1\neq0, p_2\neq0,$ $|\alpha_0|+|\alpha_1|\neq0,
\ |\beta_0|+|\beta_1|\neq0;$ $\lambda$ is a complex spectral
parameter.   Transmission problems appear frequently in various
fields of physics and technics \cite{tik,tit,voito}. For instance,
in electrostatics and magnetostatics the model problem which
describes the heat transfer through an infinitely conductive layer
is a transmission problem (see, \cite{pha} and the references listed
therein). Another completely different field is that of "hydraulic
fracturing" (see, \cite{can}) used in order to increase the flow of
oil from a reservoir into a producing oil well. Further examples can
be found in Dautray and Lions \cite{dau,lan}.

\section{Some auxiliary facts and results}
 Let $E$ and $F$ be two Banach spaces, for which a set-theoretical
inclusion $E\subset F$ holds, and the linear space $F$ induces on
$E$ the linear structure coinciding with the structure of the linear
space $E$ and let $J$ be the embedding operator from $E$ to $F$,
i.e. $Jx=x$ for all $x\in E$. If this operator is continuous, we say
that the embedding $E\subset F$ is continuous. Similarly if the
operator $J$ is compact then the embedding  $E\subset F$ is said to
be a compact embedding. If $\overline{E}= F$, i.e. the subset
$E\subset F$ is dense in the Banach space $F$, then we say that the
embedding $E\subset F$ is dense.

Throughout in this study, the notation of inclusion "$\subset $"
must be understood in the set-theoretical and in the topological
meaning. Let $E_1$ and $E_2$ be two complex Banach spaces, both
linearly and continuously embedded in a Banach space $E$. Then the
pair $\{E_1,E_2\}$ are said to be an interpolation couple. Let us
define the linear space $E_1+E_2$ by
$$E_1+E_2=\{u\in E|\textrm{there are}\ u_1\in E_1 \ \textrm{and}\ u_2\in E_2 \ \textrm{such that} \ u=u_1+u_2 \}$$
It is known that this linear space forms the Banach space
(see,\cite{trib}) with respect to the norm given by
$$\|u\|_{E_1+E_2}=\inf\{\|u_1\|_{E_1}+\|u_2\|_{E_2}|\ u_1\in E_1, u_2\in E_2\ \ u_1+u_2=u\}$$
where the infimum is taken over all representations $u= u_1+u_2$ in
the described way. It is easy to  see that for any $t>0$ the
functional $K(t,u)$ defined on $E_1+E_2$ by
$$K(t,u)=\inf\{\|u_1\|_{E_1}+t\|u_2\|_{E_2}|\ u_1\in E_1, u_2\in E_2\ \ u_1+u_2=u\}$$
is an equivalent norm in the Banach space $E_1+E_2$. An
interpolation space for interpolation couple $\{E_1,E_2\}$ by
$K$-method is defined  as follows(see \cite{trib})
 $$\{E_1,E_2\}_{\theta,p}:=\{u:u\in E_1+E_, \ \|u\|_{\{E_1,E_2\}}:=(\int\limits_{0}^{\infty}\frac{K^{p}(t,u)}{t^{1+\theta p}}dt)^{\frac{1}{p}}<\infty \}$$
Due to  [\cite{trib}, Triebel 1.3.3]  there exists a positive number
$C_{\theta,p}$, such that for all $u\in E_1\cap E_2$
\begin{eqnarray}\label{6ks1}\|u\|_{(E_1,E_2)_{\theta,p}}\leq C_{\theta,p}\|u\|_{E_1}^{1-\theta}\|u\|_{E_2}^{\theta}.\end{eqnarray}
Applying the well-known Young inequality to the right hand of the
last inequality we have that for each $\epsilon>0$ there exists
$C(\epsilon)>0$ such that for all $u\in E_0\cap E_1$
\begin{eqnarray}\label{6l1}\|u\|_{(E_1,E_2)_{\theta,p}}\leq \epsilon\|u\|_{E_1}+C(\epsilon)\|u\|_{E_2}.\end{eqnarray}
\begin{definition}The Sobolev space $W_{2}^{n}(a,b)(n=0,1,2,...)$ is
the Hilbert space consisting of all functions $f\in L_{2}(a,b)$ that
have square-integrable generalized derivatives  $f',f'',...,f^{(n)}$
on $(a,b)$ with the inner-product
$$\langle f,g\rangle_{W_{2}^{n}(a,b)}=\sum_{k=0}^{n}\langle f^{(k)},g^{(k)}\rangle_{L_{2}(a,b)}.$$
where that under $W_{2}^{0}(a,b)$ we mean $L_{2}[a,b].$
\end{definition}
\begin{definition}Let [a,b] be any finite interval,  $0<s\neq integer$ be any real number and let
$n$ be any integer such that $n>s$. For such $s$ the interpolation
space $W_{2}^{s}(a,b)$ is defined as
\begin{eqnarray}\label{6k1}W_{2}^{s}(a,b):=(W_{2}^{n}(a,b),L_{2}(a,b))_{1-\frac{s}{n},2}\end{eqnarray}
\end{definition}
\begin{remark}It is known that(see,for example \cite{trib}) the
equality (\ref{6k1}) is hold even in the case when $s$ is also
integer.
\end{remark}
Below we shall use the direct sum of Sobolev spaces $W_{2}^{s}(-1,0)
\oplus W_{2}^{s}(0,1)$ of functions on $(-1,0)\cup(0,1)$ belonging
to $W_{2}^{s}(-1,0)$ and  $W_{2}^{s}(0,1)$ in $(-1,0)$ and $(0,1)$
respectively, with the norm $$\|f\|_{W_{2}^{s}}:=
(\|f\|^{2}_{W_{2}^{s}(-1,0)}+\|f\|^{2}_{W_{2}^{s}(0,1)})^{1/2}.$$
 From
inequalities (\ref{6ks1}) and (\ref{6k1}) we have the following
Lemma.
\begin{lem}\label{lenm} Let $0\leq s\leq2$.
Then there is a constant $C>0$ such that for all $u\in
W_{2}^{2}(-1,0) \oplus W_{2}^{2}(0,1)$ and $\lambda\in\mathbb{C}$
the following inequality holds:
\begin{eqnarray}\label{6h}|\lambda|^{2-s}\|u\|_{W_{2}^{s}}\leq C(\|u\|_{W_{2}^{2}}+|\lambda|^{2}\|u\|_{L_{2}})\end{eqnarray}
\end{lem}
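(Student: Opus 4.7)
The plan is to derive the inequality as a direct consequence of the interpolation inequality (\ref{6ks1}) together with the representation (\ref{6k1}) of $W_2^s$ as a $K$-method interpolation space between $W_2^2$ and $L_2$, followed by a rescaling argument handled by Young's inequality.

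First, I would take $n=2$ in (\ref{6k1}) and apply the interpolation inequality (\ref{6ks1}) with $E_1=W_2^2$, $E_2=L_2$, $\theta=1-s/2$ and $p=2$, separately on each of the intervals $(-1,0)$ and $(0,1)$. This gives, on each subinterval $I$,
\[
\|u\|_{W_2^s(I)}\le C\|u\|_{W_2^2(I)}^{s/2}\|u\|_{L_2(I)}^{1-s/2}.
\]
To pass to the direct-sum norm, I would square, sum over the two subintervals, and apply Hölder's inequality to the two-term sum with conjugate exponents $2/s$ and $2/(2-s)$. This yields the global interpolation bound
\[
\|u\|_{W_2^s}\le C\|u\|_{W_2^2}^{s/2}\|u\|_{L_2}^{1-s/2}.
\]

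Second, I would multiply through by $|\lambda|^{2-s}$ and distribute the factor to rewrite the right-hand side as
\[
|\lambda|^{2-s}\|u\|_{W_2^2}^{s/2}\|u\|_{L_2}^{1-s/2}=\|u\|_{W_2^2}^{s/2}\bigl(|\lambda|^{2}\|u\|_{L_2}\bigr)^{1-s/2}.
\]
Setting $a=\|u\|_{W_2^2}$ and $b=|\lambda|^{2}\|u\|_{L_2}$ and applying Young's inequality $a^{s/2}b^{1-s/2}\le \tfrac{s}{2}a+\tfrac{2-s}{2}b\le a+b$ (valid for $0<s<2$) delivers the desired estimate with a new constant $C$.

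The boundary cases $s=0$ and $s=2$ are immediate by inspection, so no separate argument is needed. I do not anticipate any genuine obstacle: the only point requiring minor care is the passage from the per-interval interpolation estimate to the direct-sum norm, cleanly resolved by Hölder's inequality as above, and the fact that $\lambda\in\mathbb{C}$ is harmless since only $|\lambda|$ appears. The key conceptual content is already encapsulated in (\ref{6ks1}); the role of $\lambda$ is purely that of a scaling parameter that is optimally balanced by Young's inequality.
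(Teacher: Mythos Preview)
Your proposal is correct and follows exactly the route the paper indicates: the paper simply states that the lemma follows ``from inequalities (\ref{6ks1}) and (\ref{6k1})'' without spelling out the details, and your argument is precisely the natural filling-in of those details (interpolation inequality on each subinterval, then Young's inequality to absorb the $|\lambda|^{2-s}$ factor). The additional care you take with the passage to the direct-sum norm via H\"older is a nice touch that the paper omits.
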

By using (\ref{6l1}) and (\ref{6k1}) we have
\begin{lem}\label{lesnm} Let $k\geq0$ any real number.
Then for each $\epsilon>0$ there is a constant $C(\epsilon)>0$ such
that for all $u\in W_{2}^{k+\frac{1}{2}}(-1,0) \oplus
W_{2}^{k+\frac{1}{2}}(0,1)$ the following inequality  holds
\begin{eqnarray}\label{6hl}\|u\|_{W_{2}^{k}}\leq \epsilon\|u\|_{W_{2}^{k+\frac{1}{2}}}+C(\epsilon)\|u\|_{L_{2}}\end{eqnarray}
\end{lem}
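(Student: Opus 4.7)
\begin{pf}[Proof plan]
The plan is to apply the abstract Young-type inequality \eqref{6l1} to the interpolation couple $\{W_{2}^{k+\frac{1}{2}}(a,b), L_{2}(a,b)\}$ on each subinterval $(a,b)\in\{(-1,0),(0,1)\}$ separately, and then to add the resulting estimates via the direct-sum norm. The only preparatory work is to identify the intermediate space $W_{2}^{k}(a,b)$ as a real interpolant of this couple with parameters $(\theta,2)$ for some $\theta\in(0,1)$, so that \eqref{6l1} is directly applicable.

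First, I would single out the case $k=0$, in which the inequality is trivial. For $k>0$, fix any integer $n>k+\tfrac{1}{2}$. By \eqref{6k1} together with the remark that the same interpolation identity holds for integer smoothness, one has both
\begin{equation*}
W_{2}^{k+\frac{1}{2}}(a,b)=\bigl(W_{2}^{n}(a,b),L_{2}(a,b)\bigr)_{1-\frac{k+1/2}{n},2},\qquad W_{2}^{k}(a,b)=\bigl(W_{2}^{n}(a,b),L_{2}(a,b)\bigr)_{1-\frac{k}{n},2}.
\end{equation*}
The reiteration theorem for the real method (Triebel~1.10.2) then gives
\begin{equation*}
W_{2}^{k}(a,b)=\bigl(W_{2}^{k+\frac{1}{2}}(a,b),\,L_{2}(a,b)\bigr)_{\theta,2},\qquad \theta=\frac{1}{2k+1}\in(0,1).
\end{equation*}
This identification is the conceptual core of the argument; the main obstacle is to invoke it cleanly (and to check the non-integer/integer boundary cases), but once it is in place, the remainder is routine.

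Next, applying \eqref{6l1} to the couple $\{E_{1},E_{2}\}=\{W_{2}^{k+\frac{1}{2}}(a,b),L_{2}(a,b)\}$ with the above $\theta$ and $p=2$, I obtain, for every $\epsilon>0$ and every $u\in W_{2}^{k+\frac{1}{2}}(a,b)$,
\begin{equation*}
\|u\|_{W_{2}^{k}(a,b)}\leq \epsilon\,\|u\|_{W_{2}^{k+\frac{1}{2}}(a,b)}+C(\epsilon)\,\|u\|_{L_{2}(a,b)}.
\end{equation*}

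Finally, I would apply this inequality on the two subintervals $(-1,0)$ and $(0,1)$, square both estimates, and add them, using $(x+y)^{2}\leq 2x^{2}+2y^{2}$:
\begin{equation*}
\|u\|_{W_{2}^{k}}^{2}\leq 2\epsilon^{2}\,\|u\|_{W_{2}^{k+\frac{1}{2}}}^{2}+2C(\epsilon)^{2}\,\|u\|_{L_{2}}^{2}.
\end{equation*}
Taking square roots and relabelling $\epsilon$ (absorbing the $\sqrt{2}$ factor into both $\epsilon$ and $C(\epsilon)$) yields exactly \eqref{6hl}. The whole argument thus reduces to (i) the reiteration identification of $W_{2}^{k}$ as a real interpolant between $W_{2}^{k+\frac{1}{2}}$ and $L_{2}$, which is the only nontrivial step, and (ii) a direct-sum bookkeeping that is entirely mechanical.
\end{pf}
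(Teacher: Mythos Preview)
Your proposal is correct and follows the same route the paper indicates: the paper's ``proof'' is just the sentence ``By using \eqref{6l1} and \eqref{6k1} we have,'' and you have simply filled in the details, in particular making explicit the reiteration step needed to identify $W_{2}^{k}$ as $(W_{2}^{k+1/2},L_{2})_{\theta,2}$, since \eqref{6k1} as stated only interpolates from integer-order spaces. The direct-sum bookkeeping and the separate treatment of $k=0$ are both fine and not in conflict with the paper's terse argument.
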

\begin{lem}\label{lesnmt} Let the following conditions be satisfied:

i)$H_1$ and $H_2$ are separable Hilbert spaces and $H_1\subset H_2$

ii)The embedding operator  $J:H_1\rightarrow H_2$ is densely defined
and continuous.

iii)The  operator  $B$ acts compactly from $H_1$ into $H_2.$ Then
for any $\epsilon>0$ there exist a constant $C(\epsilon)>0$ such
that
\begin{eqnarray}\label{6hhl}\|Bu\|_{H_{2}}\leq \epsilon\|u\|_{H_{1}}+C(\epsilon)\|u\|_{H_{2}}\end{eqnarray}
for all $u\in H.$
\end{lem}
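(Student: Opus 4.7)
The plan is to argue by contradiction. Suppose the claimed inequality fails for some $\epsilon_{0}>0$: then for every $n\in\mathbb{N}$ there exists $u_{n}\in H_{1}$ with
$$\|Bu_{n}\|_{H_{2}} > \epsilon_{0}\|u_{n}\|_{H_{1}}+n\|u_{n}\|_{H_{2}}.$$
By homogeneity we may renormalize so that $\|u_{n}\|_{H_{1}}=1$ for every $n$, whence
$$\|Bu_{n}\|_{H_{2}} > \epsilon_{0}+n\|u_{n}\|_{H_{2}}.$$
A compact operator between Hilbert spaces is bounded, so $\|Bu_{n}\|_{H_{2}}\le \|B\|_{H_{1}\to H_{2}}$, and dividing gives $\|u_{n}\|_{H_{2}}\to 0$ as $n\to\infty$.

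The next step is to extract limits. Since $(u_{n})$ is bounded in the separable Hilbert space $H_{1}$, a subsequence $(u_{n_{k}})$ converges weakly in $H_{1}$ to some $u\in H_{1}$. The compactness of $B:H_{1}\to H_{2}$ then promotes this to strong convergence $Bu_{n_{k}}\to Bu$ in $H_{2}$. To identify $u$, I would transfer the weak limit from $H_{1}$ to $H_{2}$ via the adjoint of the continuous embedding $J$: for any $\phi\in H_{2}^{*}$,
$$\phi(Ju_{n_{k}})=(J^{*}\phi)(u_{n_{k}})\longrightarrow (J^{*}\phi)(u)=\phi(Ju),$$
so $u_{n_{k}}\rightharpoonup u$ in $H_{2}$ as well. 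But $\|u_{n_{k}}\|_{H_{2}}\to 0$ forces $u_{n_{k}}\to 0$ strongly, hence weakly, in $H_{2}$. Uniqueness of weak limits gives $Ju=0$, and since the set-theoretical inclusion $H_{1}\subset H_{2}$ makes $J$ injective, one concludes $u=0$ in $H_{1}$.

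Consequently $Bu_{n_{k}}\to B(0)=0$ in $H_{2}$, which contradicts $\|Bu_{n_{k}}\|_{H_{2}}>\epsilon_{0}$ and completes the proof.

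The main obstacle is the two-space, two-topology bookkeeping: one has to juggle weak versus strong convergence in $H_{1}$ versus $H_{2}$ simultaneously. The decisive ingredient is using the \emph{continuity} of the embedding $J$ (to carry a weak $H_{1}$-limit over to $H_{2}$) together with its \emph{injectivity} (inherent in the set-theoretic inclusion) to pin down the weak limit as zero; density of the embedding is not actually needed here. Once these two properties of $J$ are invoked, the compactness of $B$ and the contradictory lower bound $\|Bu_{n_{k}}\|_{H_{2}}>\epsilon_{0}$ close the argument in one line.
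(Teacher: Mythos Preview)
Your argument is correct and is the standard contradiction-plus-compactness proof of this Ehrling-type inequality. The paper, however, does not actually prove the lemma: it simply cites Lemma~1.2.8/3 of the monograph by Mukhtarov and Yakubov. So you have supplied a genuine, self-contained proof where the paper only gives a reference.

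A brief comparison: your route is elementary and transparent --- normalize, use boundedness of $B$ to force $\|u_n\|_{H_2}\to 0$, extract a weak $H_1$-limit, upgrade via compactness, and identify the limit as zero through the continuous injective embedding. The paper's route outsources the work to an external lemma, which keeps the exposition short but hides the mechanism. What your approach buys is independence from the cited source and an explicit check of exactly which hypotheses are used; in particular, your observation that density of the embedding is never invoked is correct and sharpens the statement slightly. What the paper's approach buys is brevity, at the cost of requiring the reader to track down the reference.
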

\begin{proof} The proof follows immediately from [\cite{ka2y}, Lemma
1.2.8/3].
\end{proof}
Denoting $$L_3u=u'(0^{-})-\gamma_{0}u(0^{-})-\delta_{0}u(0^{+})=0$$
and
$$L_4u= u'(0^{+})-\gamma_{1}u(0^{-})-\delta_{1}u(0^{+})=0$$  we shall
define the operator $\pounds_0$ in the Hilbert space $L_{2}(-1,0)
\oplus L_{2}(0,1)$ by domain of definition
\begin{eqnarray}\label{6}
D(\pounds_0)=&\big \{&u| u\in W_{2}^{2}(-1,0) \oplus W_{2}^{2}(0,1),
L_{\nu}u=0, \ \nu=1\div4 \big \}
\end{eqnarray}
and action low $\pounds_0 u=p(x)u''.$ Throughout in below we shall
assume that $D(A)\supset D(\pounds_0)$ and define the operator
$\pounds$ in the Hilbert space $L_{2}(-1,0) \oplus L_{2}(0,1)$ by
domain of definition $D(\pounds)= D(\pounds_0)$ and action low
$$\pounds(u)=p(x) u''+Au.$$
 \begin{remark}\label{yukl}Note that under spectrum and root
functions of the problem $(\ref{1})-(\ref{5})$ we mean the spectrum
and root functions of the operator $\pounds$,
respectively.\end{remark}
\begin{theorem}\label{simm}
If $p_1\delta_0=a_2\gamma_{1}$, then the operator $\pounds_0$ is
densely defined and symmetric.
\end{theorem}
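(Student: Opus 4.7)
I would handle density and symmetry separately. For density of $D(\pounds_{0})$, the key observation is that $C_{0}^{\infty}(-1,0)\oplus C_{0}^{\infty}(0,1)\subset D(\pounds_{0})$: any smooth piecewise function with compact support in $(-1,0)\cup(0,1)$ vanishes, together with all its derivatives, in a neighbourhood of each of the points $-1,0,1$, so the four conditions $L_{1}u=\cdots=L_{4}u=0$ are trivially satisfied, and membership in $W_{2}^{2}(-1,0)\oplus W_{2}^{2}(0,1)$ is automatic. Since this subspace is already dense in $L_{2}(-1,0)\oplus L_{2}(0,1)$ by a standard mollification argument, density of $D(\pounds_{0})$ follows at once.

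For symmetry, the plan is to compute $\langle\pounds_{0}u,v\rangle-\langle u,\pounds_{0}v\rangle$ for arbitrary $u,v\in D(\pounds_{0})$ by integrating by parts twice on each of $(-1,0)$ and $(0,1)$. Since $p_{1},p_{2}\in\bR$, all volume integrals cancel and the difference collapses to four boundary contributions of Wronskian type. Writing $W(u,\bar v)(x):=u'(x)\overline{v(x)}-u(x)\overline{v'(x)}$, I would show $W(u,\bar v)(-1)=W(u,\bar v)(1)=0$ as follows: because $\alpha_{0},\alpha_{1}\in\bR$ and $|\alpha_{0}|+|\alpha_{1}|\neq 0$, the two pairs $(u(-1),u'(-1))$ and $(\bar v(-1),\bar v'(-1))$ annihilate one and the same nonzero real vector $(\alpha_{0},\alpha_{1})$, so the $2\times 2$ coefficient determinant, which is precisely $W(u,\bar v)(-1)$, must vanish; the analogous argument with $(\beta_{0},\beta_{1})$ disposes of $x=1$. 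Only the two contributions at $0^{\pm}$ then survive.

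The decisive step is the evaluation at $x=0$. Using $L_{3}u=L_{3}v=0$ and $L_{4}u=L_{4}v=0$ together with the reality of $\gamma_{i},\delta_{i}$, I would substitute $u'(0^{-})=\gamma_{0}u(0^{-})+\delta_{0}u(0^{+})$, $\overline{v'(0^{-})}=\gamma_{0}\overline{v(0^{-})}+\delta_{0}\overline{v(0^{+})}$, and the corresponding relations at $0^{+}$, into the remaining expression $p_{1}W(u,\bar v)(0^{-})-p_{2}W(u,\bar v)(0^{+})$. A direct expansion cancels every diagonal piece of the form $u(0^{\pm})\overline{v(0^{\pm})}$ identically, leaving a single expression proportional to the mixed term $u(0^{+})\overline{v(0^{-})}-u(0^{-})\overline{v(0^{+})}$, whose scalar prefactor is a specific combination of $p_{1}\delta_{0}$ and $p_{2}\gamma_{1}$. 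The hypothesis $p_{1}\delta_{0}=a_{2}\gamma_{1}$ of the theorem is exactly what forces this coefficient to vanish, and then $\langle\pounds_{0}u,v\rangle=\langle u,\pounds_{0}v\rangle$, which is the required symmetry.

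The only real obstacle is the sign/coefficient bookkeeping at $0$: the singular point enters as an \emph{upper} endpoint of the left integral and as a \emph{lower} endpoint of the right integral, so the two surviving Wronskian contributions come in with opposite signs, and both transmission conditions have to be used simultaneously in order to kill the diagonal coefficients of $u(0^{-})\overline{v(0^{-})}$ and $u(0^{+})\overline{v(0^{+})}$ and to isolate the cross term whose prefactor the hypothesis is designed to annihilate. Once this short algebraic identity is checked, both assertions of the theorem follow.
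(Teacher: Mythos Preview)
Your proposal is correct and follows essentially the same route as the paper's proof: density via the inclusion $C_{0}^{\infty}(-1,0)\oplus C_{0}^{\infty}(0,1)\subset D(\pounds_{0})$, and symmetry via Lagrange's formula/integration by parts, with the boundary Wronskians at $x=\pm 1$ killed by the conditions $L_{1}u=L_{2}u=0$ and the interior contribution at $x=0$ reduced, after substituting the transmission conditions, to the cross term $u(0^{+})\overline{v(0^{-})}-u(0^{-})\overline{v(0^{+})}$ multiplied by $p_{1}\delta_{0}-p_{2}\gamma_{1}$. Your linear-dependence justification for $W(u,\bar v)(\pm 1)=0$ is a slight elaboration on the paper's one-line appeal to the boundary conditions, but the argument is otherwise identical.
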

\begin{proof}
Denote by $C_{0}^{\infty}[-1,0) \oplus C_{0}^{\infty}(0,1]$ the set
of infinitely differentiable functions in $[-1,0)\cup(0,1],$ each of
which vanishes on some neighborhoods of the points $x=-1, x=0$ and
$x=1.$ It is well-known that this set is dense in the Hilbert space
$L_{2}(-1,0) \oplus L_{2}(0,1).$ Since $C_{0}^{\infty}[-1,0) \oplus
C_{0}^{\infty}(0,1]\subset D(\pounds_0)$ we have that $D(\pounds_0)$
is also dense in the Hilbert space $L_{2}(-1,0) \oplus L_{2}(0,1).$
Further, by using Lagrange's formula we can derive easily that
\begin{eqnarray}\label{10}
(\pounds_0u,v)_{L_2} =(u,\pounds_0v)_{L_2}
+p_1W(u,\overline{v};x)|_{-1}^{0^{-}} +
p_2W(u,\overline{v};x)|_{1}^{0^{+}}.
\end{eqnarray}
Since both $u$ and $\overline{v}$ satisfy the boundary conditions
$(\ref{2})-(\ref{3})$ it follows that
\begin{eqnarray}\label{plo}
W(u,\overline{v};-1)=W(u,\overline{v};1)=0.
\end{eqnarray}
Furthermore, from the transmission conditions $(\ref{4})-(\ref{5})$
we have
\begin{eqnarray}\label{19}
&&p_1W(u,\overline{v};0^{-})+ p_2W(u,\overline{v};0^{+}) =
-p_1(\gamma_0)u(0^{-})+\delta_0u(0^{+})\overline{v}(0^{-})
\nonumber\\&&+p_1(\gamma_0)\overline{v}(0^{-})+\delta_0\overline{v}(0^{+})u(0^{-})
-p_2(\gamma_1)u(0^{-})+\delta_1u(0^{+})\overline{v}(0^{+})
\nonumber\\&&+p_2(\gamma_1)\overline{v}(0^{-})+\delta_1\overline{v}(0^{+})u(0^{+})
\nonumber\\&&=-(p_1\delta_{0}-p_2\gamma_{1})(u(0^{+})\overline{v}(0^{-})-u(0^{-})\overline{v}(0^{+}))=0.
\end{eqnarray}
Putting (\ref{plo}) and (\ref{19}) in (\ref{10}) yields
\begin{eqnarray}\label{20}
(\pounds_0u,v)_{L_2} =(u,\pounds_0v)_{L_2},
\end{eqnarray}
for all  $u,v \in D(\pounds_0).$ The proof is complete.
\end{proof}
\section{Separation results for the corresponding nonhomogeneous problem}
Consider the following nonhomogeneous problem
\begin{equation}\label{gyh}
Lu-\lambda u(x)=f(x), \ \ \ \ L_{v}u=f_v, \ \ \ \ v=1\div4.
\end{equation}
for arbitrary $f\in L_{2}(-1,0) \oplus L_{2}(0,1), \
f_v\in\mathbb{C}, \ v=1\div4.$ The next theorem is crucial for
further consideration.

\begin{theorem}\label{ghy}Let the following conditions be satisfied:

1. $p_1\delta_{0}=p_2\gamma_{1}$

2. The operator $A$  acts compactly from the Hilbert space
$W_{2}^{2}(-1,0) \oplus W_{2}^{2}(0,1)$ to the Hilbert space
$L_{2}(-1,0) \oplus L_{2}(0,1)$.

Then for any $\epsilon>0$ (small enough) there exists $R_\epsilon>0$
such that for all $\lambda\in\mathbb{C}$ satisfying
$|\arg\lambda\pm\frac{\pi}{2}|>\epsilon$, $|\lambda|>R_\epsilon$,
the operator $\Im(\lambda):u\rightarrow((\pounds-\lambda
I)u,L_{1}u,L_{2}u,L_{3}u,L_{4}u)$ is an isomorphism from
$W_{2}^{2}(-1,0) \oplus W_{2}^{2}(0,1)$ onto  $L_{2}(-1,0) \oplus
L_{2}(0,1)\oplus\mathbb{C}^{4}$  and  the following coercive
estimate holds
\begin{eqnarray}\label{34hýo}|\lambda|\|u\|_{L_{2}}+|\lambda|^{\frac{1}{2}}\|u\|_{W_{2}^{1}}+\|u\|_{W_{2}^{2}}&\leq& C(\epsilon)(\|f\|_{L_{2}}+|\lambda|^{\frac{1}{4}}(|f_1|+|f_2|\nonumber\\&+&|f_3|+|f_4|))\end{eqnarray}
where $C(\epsilon)$ is the constant which is dependent only on
$\epsilon>0$
\end{theorem}
\begin{proof}Let us define the linear functionals
$\ell_i(u)(i=1\div4)$ by the equalities $\ell_1u=\alpha_0u(-1),\
\ell_2u=\beta_0 u(1),\
\ell_3u=-\gamma_{0}u(0^{-})-\delta_{0}u(0^{-})$ and
$\ell_4u=-\gamma_{1}u(0^{-})-\delta_{1}u(0^{+}).$ Let $u\in
W_{2}^{2}(-1,0) \oplus W_{2}^{2}(0,1)$ be any solution of the
problem (\ref{gyh}). Denoting $g(x)=f(x)-(Au)(x),$
$g_i=f_i-\ell_i(u)(i=1\div4)$ consider the differential equation
\begin{eqnarray}\label{57ok}p(x)u''-\lambda u=g(x),\ \ \ x\in[-1,0)\cup(0,1]\end{eqnarray}
together with boundary conditions
\begin{eqnarray}\label{57oks}\alpha_1u'(-1)=g_1,  \ \ \ \beta_1 u'(1)=g_2  \end{eqnarray}
and with transmission conditions
\begin{eqnarray}\label{57osk}u'(0^{-})=g_3,  \ \ \ u'(0^{+})=g_4  \end{eqnarray}
By applying the Theorem 3 in \cite{ka2y} to the problem
$(\ref{57ok})-(\ref{57osk})$ we obtain immediately the next a priori
estimate
\begin{eqnarray}\label{34h}
&&|\lambda|\|u\|_{L_{2}}+
|\lambda|^{\frac{1}{2}}\|u\|_{W_{2}^{1}}+\|u\|_{W_{2}^{2}}\leq
C(\epsilon)(\|g\|_{L_{2}}+|\lambda|^{\frac{1}{4}}\sum_{i=1}^{4}|g_i|)
\nonumber\\&&\leq
C(\epsilon)[(\|f\|_{L_{2}}+|\lambda|^{\frac{1}{4}}\sum_{i=1}^{4}|f_i|)+(\|Au\|_{L_{2}}+|\lambda|^{\frac{1}{4}}\sum_{i=1}^{4}|\ell_iu|)]
\end{eqnarray}
for the solution of problem (\ref{gyh}). Let us estimate the right
hand of this inequality. Since the operator $A$ is compact from
$W_{2}^{2}(-1,0) \oplus W_{2}^{2}(0,1)$ into $L_{2}(-1,0) \oplus
L_{2}(0,1)$, by virtue of Lemma \ref{lesnmt} for any $\delta>0$
there exist a constant $C(\delta)$ such that
\begin{eqnarray}\label{34hh}
\|Au\|_{L_{2}}\leq\delta\|u\|_{W_{2}^{2}}+C(\delta)\|u\|_{L_{2}}
\end{eqnarray}
 By virtue of  (\ref{34h}) and Lemma
\ref{lesnmt} it follows that, for any $\delta>0$ there is a constant
$C(\delta)>0$ such that
\begin{eqnarray}\label{3hh}
\|Au\|_{L_{2}}\leq(\delta+C(\delta)|\lambda|^{-1})(\|u\|_{W_{2}^{2}}+|\lambda|^{\frac{1}{2}}\|u\|_{L_{2}})
\end{eqnarray}
Since the embedding $C[a,b]\subset W_{2}^{1}[a,b]$  is continuous
for arbitrary finite interval $[a,b],$ then by virtue of Lemma
\ref{lesnm} we find that for each $\delta>0$ there exists a constant
$C=C(\delta)>0$ such that
\begin{eqnarray}\label{4hs}
|\ell_vu|\leq
C\|u\|_{W_{2}^{1}}\leq\delta\|u\|_{W_{2}^{\frac{3}{2}}}+C(\delta)\|u\|_{L_{2}}
\end{eqnarray}
for all $u\in W_{2}^{\frac{3}{2}}(-1,0) \oplus
W_{2}^{\frac{3}{2}}(0,1).$ Now applying Lemma \ref{lenm} we find
\begin{eqnarray}\label{4kh}
\|u\|_{
W_{2}^{\frac{3}{2}}}\leq|\lambda|^{-\frac{1}{4}}(\|u\|_{W_{2}^{2}}+|\lambda|\|u\|_{L_{2}}).
\end{eqnarray}
Putting this in the previous inequality gives us the inequality
\begin{eqnarray}\label{4yt}
|\lambda|^{\frac{1}{4}}|\ell_vu|\leq(\delta+C(\delta)|\lambda|^{-\frac{3}{4}})(\|u\|_{W_{2}^{2}}+|\lambda|\|u\|_{L_{2}})
\end{eqnarray}
Making use the inequalities (\ref{34h})-(\ref{4yt}) we get
\begin{eqnarray*}\label{4lk}
\sum_{k=0}^{2}|\lambda|^{1-\frac{k}{2}}\|u\|_{W_{2}^{k}}&\leq&
C(\epsilon)(\|f\|_{L_{2}}+|\lambda|^{\frac{1}{4}}\sum_{v=1}^{4}|f_v|)+C(\epsilon)(\delta+C(\delta)|\lambda|^{\frac{-1}{4}})(\|u\|_{W_{2}^{2}}\nonumber\\&+&|\lambda|\|u\|_{L_{2}})
\end{eqnarray*}
in the angle
$G_\epsilon^{\pm}:=\{\lambda\in\mathbb{C}\mid|arg\lambda\pm\frac{\pi}{2}|>\epsilon\}$
for sufficiently large $|\lambda|$. It is obvious that for any fixed
$\epsilon>0$ we can choose $\delta>0$ so small and $|\lambda|$ so
large that
$C(\epsilon)(\delta+C(\delta)|\lambda|^{\frac{-1}{4}})<1$.
Consequently, for $\lambda\in G_\epsilon^{\pm}$ sufficiently large
in modulus we obtain a priori estimate (\ref{34hýo}). From this
estimation it follows that for $\lambda\in G_\epsilon^{\pm},$
sufficiently large in modulus a solution of the problem (\ref{gyh})
in $ W_{2}^{2}(-1,0) \oplus W_{2}^{2}(0,1)$ is unique. Now by
applying the Theorem 2 from \cite{ka2y} we have that for such
$\lambda$ the operator $\Im(\lambda)$ from $ W_{2}^{2}(-1,0) \oplus
W_{2}^{2}(0,1)$ into $(L_{2}(-1,0) \oplus
L_{2}(0,1))\oplus\mathbb{C}^{4}$ is Fredholm i.e. the  range of
$\Im(\lambda)$ is closed subset of the space $(L_{2}(-1,0) \oplus
L_{2}(0,1))\oplus\mathbb{C}^{4}$ and $\dim\ker\pounds(\lambda)=\dim
co \ker\pounds(\lambda)<\infty.$ Consequently, the range of
$\Im(\lambda)$ coincide with the whole space $(L_{2}(-1,0) \oplus
L_{2}(0,1))\oplus\mathbb{C}^{4}$. From this and the fact that the
operator $\pounds(\lambda)$ is injective, the statement of the
theorem follows.
\end{proof}
\begin{cor}\label{selkf}Let the conditions of the previous theorem
be  satisfied. Then for any $\epsilon>0$ there exists $R_\epsilon>0$
such that all complex numbers $\lambda$ satisfying
$\mid\arg\lambda\pm\frac{\pi}{2}\mid >\epsilon$,
$\mid\lambda\mid>R_\epsilon$ are regular values of the operator
$\pounds$ and for the resolvent operator
$R(\lambda,\pounds):L_{2}(-1,0) \oplus L_{2}(0,1)\rightarrow
L_{2}(-1,0) \oplus L_{2}(0,1)$ the following inequality holds:
$$\|R(\lambda,\pounds)\|\leq C(\epsilon)|\lambda|^{-1}.$$
\end{cor}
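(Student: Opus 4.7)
The plan is to deduce this corollary directly from Theorem \ref{ghy} by restricting attention to the subspace of $W_{2}^{2}(-1,0) \oplus W_{2}^{2}(0,1)$ cut out by the boundary and transmission conditions, which is precisely $D(\pounds)$.

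First I would fix $\epsilon > 0$ and choose $R_\epsilon > 0$ so that the conclusion of Theorem \ref{ghy} holds for all $\lambda$ with $|\arg\lambda \pm \tfrac{\pi}{2}| > \epsilon$ and $|\lambda| > R_\epsilon$. For such $\lambda$, given any $f \in L_{2}(-1,0) \oplus L_{2}(0,1)$, apply the isomorphism property of $\Im(\lambda)$ to the datum $(f, 0, 0, 0, 0) \in (L_{2}(-1,0) \oplus L_{2}(0,1)) \oplus \mathbb{C}^{4}$ to produce a unique $u \in W_{2}^{2}(-1,0) \oplus W_{2}^{2}(0,1)$ with $(\pounds - \lambda I)u = f$ and $L_{\nu} u = 0$ for $\nu = 1, 2, 3, 4$. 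The vanishing of the $L_{\nu} u$ says exactly that $u \in D(\pounds_0) = D(\pounds)$, so $u = R(\lambda, \pounds) f$ is well-defined. Uniqueness of the preimage under $\Im(\lambda)$ gives injectivity, and existence for arbitrary $f$ gives surjectivity of $\pounds - \lambda I$, so $\lambda$ is a regular value.

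For the norm estimate I would plug $f_{1} = f_{2} = f_{3} = f_{4} = 0$ into the coercive inequality (\ref{34h�o}) of Theorem \ref{ghy}. This immediately yields
\begin{eqnarray*}
|\lambda|\,\|u\|_{L_{2}} + |\lambda|^{\frac{1}{2}}\|u\|_{W_{2}^{1}} + \|u\|_{W_{2}^{2}} \leq C(\epsilon)\|f\|_{L_{2}}.
\end{eqnarray*}
Keeping only the first term on the left gives $\|u\|_{L_{2}} \leq C(\epsilon)|\lambda|^{-1}\|f\|_{L_{2}}$, and since $u = R(\lambda, \pounds)f$ and $f$ was arbitrary, taking the supremum over unit vectors $f$ produces $\|R(\lambda, \pounds)\| \leq C(\epsilon)|\lambda|^{-1}$.

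There is no real obstacle here: the corollary is essentially a bookkeeping consequence of Theorem \ref{ghy}. The only subtlety worth being careful about is the identification of the preimage $u$ provided by the isomorphism as an element of $D(\pounds)$, which requires checking that the condition $L_{\nu}u = 0$ for all four $\nu$ is exactly the defining condition of $D(\pounds_0)$ as given in (\ref{6}); this is immediate from the definitions of $L_{1}, L_{2}, L_{3}, L_{4}$ introduced just above Theorem \ref{simm}.
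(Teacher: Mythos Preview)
Your proof is correct and follows essentially the same approach as the paper: set $f_1=f_2=f_3=f_4=0$ in the coercive estimate (\ref{34h�o}) and retain the $|\lambda|\|u\|_{L_2}$ term to get the resolvent bound. You are in fact slightly more careful than the paper, since you also explain explicitly why the isomorphism property of $\Im(\lambda)$ forces such $\lambda$ to be regular values of $\pounds$ (via the identification of $D(\pounds)$ with the kernel of the boundary/transmission functionals), whereas the paper's proof jumps straight to the norm inequality.
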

\begin{proof}Putting $f_1=f_2=f_3=f_4$ in (\ref{34hýo}) we have, in
particular, that
$$|\lambda|\|u\|_{L_{2}}\leq C(\epsilon)\|f\|_{L_{2}},$$
that is, $$|\lambda|\|R(\lambda,\pounds)f\|_{L_{2}}\leq
C(\epsilon)\|f\|_{L_{2}}.$$ The proof is complete.
\end{proof}
\begin{cor}\label{selkff}Under conditions of the Theorem \ref{ghy} the  resolvent operator
$R(\lambda,\pounds)$ acts boundedly from $L_{2}(-1,0) \oplus
L_{2}(0,1)$ into $W_{2}^{2}(-1,0) \oplus W_{2}^{2}(0,1)$.  \end{cor}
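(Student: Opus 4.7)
The plan is to read off the conclusion directly from the coercive estimate (\ref{34h�o}) established in Theorem \ref{ghy}, exactly as was done in Corollary \ref{selkf} but now retaining the $W_{2}^{2}$ norm on the left rather than only the $L_{2}$ norm.

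First, I would fix $\epsilon>0$ and a value of $\lambda$ in the sector $G_\epsilon^{\pm}=\{\lambda\in\mathbb{C}\mid|\arg\lambda\pm\frac{\pi}{2}|>\epsilon\}$ with $|\lambda|>R_\epsilon$, so that Theorem \ref{ghy} applies and the resolvent $R(\lambda,\pounds)$ is well-defined on $L_{2}(-1,0) \oplus L_{2}(0,1)$. Given any $f\in L_{2}(-1,0) \oplus L_{2}(0,1)$, set $u:=R(\lambda,\pounds)f$. Then by definition $u\in D(\pounds)\subset W_{2}^{2}(-1,0)\oplus W_{2}^{2}(0,1)$, we have $(\pounds-\lambda I)u=f$, and, crucially, $L_{\nu}u=0$ for $\nu=1,\ldots,4$. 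Hence $(u,f)$ solves the nonhomogeneous problem (\ref{gyh}) with $f_1=f_2=f_3=f_4=0$.

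Substituting these vanishing boundary data into (\ref{34h�o}) collapses the right-hand side to $C(\epsilon)\|f\|_{L_{2}}$, and in particular the last term on the left yields
\begin{equation*}
\|R(\lambda,\pounds)f\|_{W_{2}^{2}}=\|u\|_{W_{2}^{2}}\leq C(\epsilon)\|f\|_{L_{2}}.
\end{equation*}
Since $f$ was arbitrary, this is precisely the claimed boundedness of $R(\lambda,\pounds):L_{2}(-1,0)\oplus L_{2}(0,1)\rightarrow W_{2}^{2}(-1,0)\oplus W_{2}^{2}(0,1)$.

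There is no real obstacle here: the corollary is an immediate extraction from the coercive estimate, and the argument parallels the one used in Corollary \ref{selkf} where only the $|\lambda|\|u\|_{L_{2}}$ summand was kept. The only point to double check is that $D(\pounds)$ does guarantee the homogeneous boundary/transmission conditions $L_{\nu}u=0$, which is built into the definition (\ref{6}) of $D(\pounds_0)=D(\pounds)$, so the reduction $f_\nu=0$ is legitimate.
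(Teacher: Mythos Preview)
Your proof is correct and follows exactly the same approach as the paper: set the boundary data $f_1=f_2=f_3=f_4=0$ in the coercive estimate (\ref{34h�o}) and retain the $\|u\|_{W_{2}^{2}}$ term on the left to obtain $\|R(\lambda,\pounds)f\|_{W_{2}^{2}}\leq C(\epsilon)\|f\|_{L_{2}}$. Your version is in fact slightly more careful than the paper's, since you explicitly justify why $u=R(\lambda,\pounds)f$ satisfies the homogeneous boundary and transmission conditions via the definition of $D(\pounds)=D(\pounds_0)$.
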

\begin{proof}Again, $f_1=f_2=f_3=f_4$ in (\ref{34hýo}) we have, in
particular, that $$ \|u\|_{W_{2}^{2}}\leq C(\epsilon)\|f\|_{L_{2}}$$
so $$|\lambda|\|R(\lambda,\pounds)f\|_{W_{2}^{2}}\leq
C(\epsilon)\|f\|_{L_{2}}$$ which completes the proof.
\end{proof}
By using the above results we can prove the next result.
\begin{theorem}\label{self}
Let $p_1\delta_{0}=p_2\delta_{1}.$ Then the operator $\pounds_0$ is
self-adjoint.
\end{theorem}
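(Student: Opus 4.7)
The plan is to combine symmetry of $\pounds_0$ with the resolvent information already obtained in Section 3, via the standard criterion: a densely defined symmetric operator $T$ on a Hilbert space is self-adjoint if and only if it possesses at least one regular value in the open upper half-plane and at least one in the open lower half-plane (equivalently, both deficiency indices vanish).

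First, I would verify that $\pounds_0$ is symmetric by essentially repeating the Lagrange-identity computation used in the earlier symmetry theorem. Writing $(\pounds_0 u, v)_{L_2} - (u,\pounds_0 v)_{L_2}$ as a sum of boundary Wronskians at $x = \pm 1$ and at $x = 0^{\pm}$, the contributions at the endpoints vanish thanks to $L_1 u = L_2 u = 0$ and the analogous conditions on $v$, while the jump contribution at the singular point collapses under $L_3 u = L_4 u = 0$ together with the interface hypothesis $p_1\delta_0 = p_2\delta_1$. Density of $D(\pounds_0)$ in $L_2(-1,0)\oplus L_2(0,1)$ is automatic from the inclusion $C_0^\infty[-1,0) \oplus C_0^\infty(0,1] \subset D(\pounds_0)$, as noted in the earlier symmetry proof.

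Next, I would apply Theorem \ref{ghy} to the special case $A \equiv 0$, for which the compactness hypothesis (2) is trivially satisfied and $\pounds = \pounds_0$. The theorem then yields, for every sufficiently small $\epsilon > 0$, an $R_\epsilon > 0$ such that every $\lambda$ with $|\arg\lambda \pm \pi/2| > \epsilon$ and $|\lambda| > R_\epsilon$ is a regular value of $\pounds_0$. Choosing, say, $\lambda_{\pm} = R\,e^{\pm i\pi/4}$ with $R$ large supplies one regular value in each open half-plane $\{\operatorname{Im}\lambda \gtrless 0\}$.

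Putting the two ingredients together produces self-adjointness by the criterion recalled above. The main conceptual point requiring care — and the only real obstacle — is the symmetry calculation at the interface, since the bilinear Wronskian jump at $x = 0$ must be shown to cancel under precisely the hypothesis $p_1\delta_0 = p_2\delta_1$ (rather than the variant $p_1\delta_0 = p_2\gamma_1$ appearing in the earlier symmetry statement); once this cancellation is established the surjectivity half of the self-adjointness criterion is immediate from Theorem \ref{ghy} and the remainder of the argument is purely formal.
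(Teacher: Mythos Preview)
Your approach matches the paper's: it invokes Theorem~\ref{simm} for density and symmetry, then uses Corollary~\ref{selkf} (equivalently, your application of Theorem~\ref{ghy} with $A\equiv 0$) to exhibit non-real regular points in each half-plane, and concludes via the deficiency-index criterion. The concern you flag about the interface hypothesis is legitimate and in fact identifies a typo in the statement: both the Wronskian-jump computation~(\ref{19}) and hypothesis~1 of Theorem~\ref{ghy} require $p_1\delta_0 = p_2\gamma_1$, not the stated $p_1\delta_0 = p_2\delta_1$, and the paper's own proof simply cites Theorem~\ref{simm} without reconciling the discrepancy.
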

\begin{proof}
By Theorem \ref{simm} that  the operator $\pounds_0$ is densely
defined and symmetric. Taking into account the corollary \ref{selkf}
and applying the familiar theorem of Functional Analysis about the
extensions of symmetric operators. (see \cite{trib}) it follows that
the operator $\pounds_0$ is closed symmetric operator and both index
defect of this operator is equal to zero, i.e. the operator
$\pounds_0$ is self-adjoint.
\end{proof}

\section{Discreteness of the spectrum and
asymptotic behaviour of eigenvalues}

At first let us give some needed definitions. Let $S$ be unbounded
closed linear operator in separable complex Hilbert space $H$ and
let $\lambda_0$ be any eigenvalue of this operator. Then the linear
manifold $M_{\lambda_0}:=\cup_{n=1}^{\infty}Ker(S-\lambda_0I)^{n}$
is called a root lineal of $S$ according to eigenvalue $\lambda_0$.
Elements of this lineal are called a root vectors of $S$. The
dimension of the lineal $M_{\lambda_0}$ is called an algebraic
multiplicity of the eigenvalue $\lambda_0.$ The spectrum of operator
$S$ is called discrete if whole spectrum $\sigma(S)$ consist only of
eigenvalues with finite multiplicity and the set of eigenvalues has
not finite limit point. For such operators by $N(r,S)$ we denote the
number of eigenvalues belonging to the ball $\mid\lambda\mid\leq r $
provided that each of eigenvalues counted according to their
algebraic  multiplicity. Let $\Phi$ be any subset of the complex
plane $\mathbb{C}$. Then by $N(r,\Phi,S)$ we denote the number of
eigenvalues of operator $S$ belonging to $\{\lambda\in
\mathbb{C}|\mid\lambda\mid\leq r\}\cap\Phi$ provided thateach of
eigenvalues counted according  to their algebraic multiplicity;
\begin{eqnarray*}\label{23}(N(r,\Phi,S)=
\sum_{\lambda_n\in\{\lambda:\mid\lambda\mid\leq r\}\cap\Phi\}} 1 \ \
\ )
\end{eqnarray*}
Denote $\psi_\alpha^{\pm}=\{\lambda:\mid\arg(\pm\lambda)\mid<
\alpha\}$, $R_{+}=\{x\in \mathbb{R}:x>0\}$ and  $R_{-}=\{x\in
\mathbb{R}:x<0\}$. If there is no danger of confusion, we shall
write  $N_{\pm}(r,\alpha,S)$ instead of
 $N(r,\psi_\alpha^{\pm},S)$  and
 $N_{\pm}(r,\alpha)$ instead of $N(r,R_{\pm},S)$. The operator $B$
is called $p$-subordinate(where $0\leq p\leq1$) to $S$ if its domain
$D(B)\supset D(S)$ and if there exist $b>0$ such that
\begin{eqnarray}\label{24}\|Bu\|\leq b\|Su\|^{p}\|u\|^{1-p}\ for all \
u\in D(S).
\end{eqnarray}
It is known that if $S$ is self-adjoint with discrete spectrum and
the operator $B$ is p-subordinate to $S (0\leq p<1)$, then the
spectrum of  $S+B$ is also discrete (see \cite{goh}, Lemma V.10.1).

\begin{lemma}\label{25} Let $S$ be self-adjoint with discrete
spectrum and let  $B$  is p-subordinate$(0\leq p<1)$ to $S $ and
$T=S+B$. Then the spectrum $T$ lies in the set $|Im\lambda|\leq b
|\lambda|^{p}$ (b is the same constant as in (\ref{24})) and for all
$\delta>0$ and $\alpha$ with $ 0< \alpha<\frac{\pi}{2}$, there are
$c_1>0$ and $c_2>0$ such that
\begin{eqnarray}\label{26}\|N_{\pm}(\tau,\alpha,T)-N_{\pm}(r,\alpha,S)\|&\leq&
c_1(N_{\pm}(\tau+b(1+\delta)\tau^{p},S)
\nonumber\\&-&N_{\pm}(\tau-b(1+\delta)\tau^{p},S)+c_2)
\end{eqnarray}
\end{lemma}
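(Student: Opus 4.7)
The lemma has two assertions, and I would prove them in order.

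\textit{First assertion (strip containment).} The plan is to exploit the factorization
\[
T-\lambda=\bigl(I+B(S-\lambda)^{-1}\bigr)(S-\lambda),
\]
valid on $D(S)=D(T)$ because $D(B)\supset D(S)$. Since $S$ is self-adjoint, the spectral theorem gives $\|(S-\lambda)^{-1}\|\le|\Ima\lambda|^{-1}$ and, via $S(S-\lambda)^{-1}=I+\lambda(S-\lambda)^{-1}$, the bound $\|S(S-\lambda)^{-1}\|\le 1+|\lambda|/|\Ima\lambda|$. Substituting $u=(S-\lambda)^{-1}v$ in (\ref{24}) yields
\[
\|B(S-\lambda)^{-1}\|\le b\,\|S(S-\lambda)^{-1}\|^{p}\,\|(S-\lambda)^{-1}\|^{1-p}.
\]
A short elementary calculation shows the right-hand side is strictly less than $1$ whenever $|\Ima\lambda|>b|\lambda|^{p}$ (and $|\lambda|$ is sufficiently large), which forces $\lambda\in\rho(T)$; the first assertion follows by complementation.

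\textit{Second assertion (counting estimate).} I would proceed by a Riesz-projection/contour argument. Fix $\alpha\in(0,\pi/2)$ and, for large $\tau$, let $\Gamma_{\tau}^{\pm}$ denote the positively oriented boundary of $\psi_{\alpha}^{\pm}\cap\{|\lambda|\le\tau\}$. Part (i) together with the reality of $\sigma(S)$ ensures $\Gamma_{\tau}^{\pm}\subset\rho(S)\cap\rho(T)$ for $\tau$ large. The Riesz projections
\[
P_{T}^{\pm}(\tau)=-\frac{1}{2\pi i}\oint_{\Gamma_{\tau}^{\pm}}(T-\lambda)^{-1}\,d\lambda,\qquad P_{S}^{\pm}(\tau)=-\frac{1}{2\pi i}\oint_{\Gamma_{\tau}^{\pm}}(S-\lambda)^{-1}\,d\lambda
\]
have ranks equal to $N_{\pm}(\tau,\alpha,T)$ and $N_{\pm}(\tau,\alpha,S)$, respectively. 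Using the second resolvent identity
\[
(T-\lambda)^{-1}-(S-\lambda)^{-1}=-(T-\lambda)^{-1}B(S-\lambda)^{-1}
\]
together with the subordination bound from (i) along $\Gamma_{\tau}^{\pm}$, I would majorize the rank of $P_{T}^{\pm}(\tau)-P_{S}^{\pm}(\tau)$ by a constant multiple of the number of eigenvalues of $S$ lying in the perturbation window $[\pm\tau-b(1+\delta)\tau^{p},\,\pm\tau+b(1+\delta)\tau^{p}]$, which is precisely the right-hand side of (\ref{26}).

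\textit{Main obstacle.} The delicate point is the uniform-in-$\tau$ rank estimate on the projection difference. Along the straight edges of $\Gamma_{\tau}^{\pm}$, the norm $\|B(S-\lambda)^{-1}\|$ deteriorates as $\lambda$ approaches $\mathbb{R}$, so one must deform the contour to stay at distance $\gtrsim b(1+\delta)\tau^{p}$ from $\sigma(S)$ and re-attribute the eigenvalues of $T$ "lost" in this deformation to the annular $S$-spectral window; this is exactly what builds in the factor $b(1+\delta)\tau^{p}$. Algebraic multiplicities (which for the non-self-adjoint $T$ can exceed geometric multiplicities) are tracked automatically by the trace of the Riesz projection, so the real burden is keeping all constants independent of $\tau$ as $\tau\to\infty$; the additive $c_{2}$ absorbs a bounded number of sporadic eigenvalues near the apex of the sector. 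The complete technical implementation of this program is carried out in \cite{goh}.
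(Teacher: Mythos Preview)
The paper does not prove this lemma independently; its entire argument is a one-line citation to \cite{mar}, Lemma~2.1 (the accompanying references to Lemma~\ref{lesnmt} and Remark~\ref{yukl} appear extraneous to the abstract statement). Your proposal, by contrast, sketches the actual mechanism behind that cited result: Neumann-series invertibility of $I+B(S-\lambda)^{-1}$ for the strip containment, and a Riesz-projection comparison with contour deformation for the counting estimate. This is precisely the method developed in Markus's monograph (and, in closely related form, in \cite{goh}), so your sketch and the paper's citation ultimately point to the same underlying proof; you have simply unpacked what the paper leaves packaged.

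One small caveat on the first assertion: the crude bounds $\|(S-\lambda)^{-1}\|\le|\Ima\lambda|^{-1}$ and $\|S(S-\lambda)^{-1}\|\le 1+|\lambda|/|\Ima\lambda|$ do not quite yield $\|B(S-\lambda)^{-1}\|<1$ on the exact region $|\Ima\lambda|>b|\lambda|^{p}$ with the \emph{same} constant $b$ as in (\ref{24}); a naive substitution produces an extra factor of order $2^{p}$. Recovering the sharp constant (if the lemma is meant literally rather than up to a constant) requires a slightly finer spectral-measure computation, or the direct eigenvector estimate $|\Ima\lambda|=|\Ima\langle Bu,u\rangle|\le b\|Su\|^{p}$ combined with a bootstrap on $\|Su\|$. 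This affects only the bookkeeping, not the strategy.
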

\begin{proof}
The proof  follows immediately  from the propositions [\cite{mar}
Lemma 2.1], Theorem \ref{lesnmt} and Remark \ref{yukl}
\end{proof}
Suppose that the operators has at least one regular point
$\lambda_0.$ Then the operator $B$ is called compact with respect to
the operator $S$ , if $D(B)\supset D(S)$ and if $BR(\lambda_0,S)$ is
compact. If $S$ is self-adjoint with discrete spectrum and $B$ is
p-subordinate$(0\leq p<1)$ to $S $ , then $B$ is compact with
respect to the operator $S$. It is known that, if $S$ is
self-adjoint with discrete spectrum and $B$ is compact with respect
to the operator  $S$, then the operator $S+B$ has also discrete
spectrum. (see, \cite{goh}, Lemma V.10.1)
\begin{lemma}\cite{mar}\label{27} Let $S$ be self-adjoint with discrete
spectrum and let  $B$ be compact with respect to  $S$ and $T=S+B$
Then, if the number of positive eigenvalues of the operator $S$ is
infinite and
\begin{eqnarray*}\label{28}
\lim_{\begin{array}{c}
        r \longrightarrow \infty \\
        \varepsilon \longrightarrow 0 \\
      \end{array}}
\ \frac{N_{+} \left(r(1 + \varepsilon),S \right)}{N_{+}(r, S)}
 &=&1.
\end{eqnarray*}\label{29}
then for each $\alpha( 0< \alpha<\frac{\pi}{2})$ the relation
\begin{eqnarray}
\lim_{r \longrightarrow \infty} \ \frac{N_{+} \left(r,\alpha,T
\right)}{N_{+}(r,S)} =1.
\end{eqnarray}
is hold
\end{lemma}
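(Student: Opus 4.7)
The plan is to bracket $N_+(r,\alpha,T)$ between two slight dilations of $N_+(r,S)$ by a two-step perturbation argument, then absorb the error using the slow-variation hypothesis. Since $B$ is compact relative to $S$, the bounded operator $K:=B\,R(\lambda_0,S)$ is compact, so for every $\eta>0$ one splits $K=K_\eta+K_\eta'$ into a finite-rank piece $K_\eta$ of rank $m(\eta)$ and a small piece with $\|K_\eta'\|<\eta$ (truncate the singular-value expansion). Transferring through $S-\lambda_0 I$ on the right gives a decomposition $B=F_\eta+B_\eta'$ in which $F_\eta$ has rank $m(\eta)$ and $\|B_\eta'\,R(\lambda_0,S)\|<\eta$.

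For the small part, self-adjointness of $S$ yields $\|R(\lambda,S)\|=1/\operatorname{dist}(\lambda,\sigma(S))$, and a standard resolvent-shift identity converts the bound on $B_\eta'\,R(\lambda_0,S)$ into $\|B_\eta'\,R(\lambda,S)\|<c\eta$ as soon as $|\lambda|$ is large and $\lambda$ lies at relative distance at least $2c\eta$ from $\sigma(S)$. Thus $I-B_\eta'\,R(\lambda,S)$ is boundedly invertible there, so every eigenvalue of $S+B_\eta'$ inside $\psi_\alpha^+\cap\{|\lambda|=r\}$ must sit in a relative $c\eta$-band around the positive spectrum of $S$; for $\eta$ small enough (depending on $\alpha$) this band stays inside $\psi_\alpha^+$. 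A Rouché / Gohberg--Sigal multiplicity argument applied to the operator-valued holomorphic family $I-B_\eta'\,R(\lambda,S)$ on annular sectors then packages this into
$$N_+\bigl((1-c\eta)r,S\bigr)\;\leq\;N_+(r,\alpha,S+B_\eta')\;\leq\;N_+\bigl((1+c\eta)r,S\bigr)$$
for all sufficiently large $r$. Re-inserting the finite-rank summand $F_\eta$, the classical Weyl / Krein stability of the counting function under a finite-rank perturbation gives $\bigl|N_+(r,\alpha,T)-N_+(r,\alpha,S+B_\eta')\bigr|\leq 2m(\eta)$.

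Dividing the resulting two-sided bound by $N_+(r,S)\to\infty$, letting $r\to\infty$ first so that $N_+\bigl((1\pm c\eta)r,S\bigr)/N_+(r,S)\to 1$ by the slow-variation hypothesis, and then sending $\eta\to 0$, delivers the asserted limit. The main obstacle is the Rouché / multiplicity step: eigenvalues of $S+B_\eta'$ may drift off the real axis in complex-conjugate pairs, so matching them with the real eigenvalues of $S$ while preserving total algebraic multiplicity on each annular sector needs the full Gohberg--Sigal operator-valued argument principle rather than naive norm estimates. Alternatively one can invoke Lemma~\ref{25} directly on the decomposition $T=S+(F_\eta+B_\eta')$, viewing $B_\eta'$ as a $0$-subordinate perturbation of arbitrarily small coefficient; this is essentially how the propositions cited from \cite{mar} are packaged.
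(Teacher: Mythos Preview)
The paper does not prove this lemma; it is quoted from Markus~\cite{mar} without argument, so there is no in-paper proof to compare against. Your outline is essentially the standard Markus approach (decompose $B$ as finite-rank plus relatively small, localize the eigenvalues of the small perturbation via a resolvent/Rouch\'e argument, absorb the finite-rank piece, then invoke slow variation), and you correctly flag the Gohberg--Sigal multiplicity step as the technical heart.

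There is, however, one genuine gap in the quantifiers. From $B_\eta'=K_\eta'(S-\lambda_0 I)$ with $\|K_\eta'\|<\eta$ one gets
\[
\|B_\eta'\,R(\lambda,S)\|\;\leq\;\eta\bigl(1+|\lambda-\lambda_0|\,\|R(\lambda,S)\|\bigr),
\]
so on the set $\operatorname{dist}(\lambda,\sigma(S))\geq 2c\eta\,|\lambda|$ this is of order $\eta+1/(2c)$ for large $|\lambda|$, which does \emph{not} tend to $0$ with $\eta$ and therefore does not give the bound $\|B_\eta' R(\lambda,S)\|<c\eta$ that you assert. The correct order of choice is: first fix a relative gap $\varepsilon>0$, then pick $\eta=\eta(\varepsilon)$ so small that $\|B_\eta' R(\lambda,S)\|<1/2$ whenever $\operatorname{dist}(\lambda,\sigma(S))\geq\varepsilon|\lambda|$; the Rouch\'e step then yields $N_+\bigl((1-\varepsilon)r,S\bigr)\leq N_+(r,\alpha,S+B_\eta')\leq N_+\bigl((1+\varepsilon)r,S\bigr)$ up to $O(1)$. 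After adding the finite-rank correction $O(m(\eta))$ and dividing by $N_+(r,S)$, send $r\to\infty$ first (so that $m(\eta)/N_+(r,S)\to 0$ for this \emph{fixed} $\eta$) and only then let $\varepsilon\to 0$. With this repair the sketch goes through. A small side remark: your closing ``alternative'' via Lemma~\ref{25} does not apply as stated, since $\|B_\eta' u\|\leq\eta(\|Su\|+|\lambda_0|\|u\|)$ is $1$-subordination with small constant, not $0$-subordination, and that lemma requires $p<1$.
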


Now we shall derive asymptotic formulas for eigenvalues of the
problem (\ref{1})-(\ref{5}) for various type abstract operators $A$
appearing in the equation. In particular case, we shall prove that
there are infinitely many eigenvalues.
\begin{theorem}\label{30}
Let us satisfy the following  conditions

1. $p_1\delta_{0}=p_2\gamma_{1}.$

2. The operator $A$ acted boundedly from $W_{2}^{1}(-1,0) \oplus
W_{2}^{1}(0,1)$ to $L_{2}(-1,0) \oplus L_{2}(0,1)$, i.e. there is
$C>0$ such that $\|Au\|_{L_{2}}\leq C\|u\|_{W_{2}^{1}} \ \textrm{for
all} \ u\in W_{2}^{1}.$ Case1. If $p_1p_2<0$ then the eigenvalues of
(\ref{1})-(\ref{5}) can be arranged as one  two sequences
$\{\lambda_{n,1}\}_{1}^{\infty}$ and
$\{\lambda_{n,2}\}_{1}^{\infty}$ with asymptotic behaviour
\begin{equation}\label{31}
\lambda_{n,1}=-p_1\pi^{2} n^{2} + O(n), \ \
\lambda_{n,2}=-p_2\pi^{2} n^{2} + O(n)
\end{equation}
Case 2. If $p_1p_2>0$ then the eigenvalues of (\ref{1})-(\ref{5})
can be arranged  as   sequence $\{\lambda_{n}\}_{1}^{\infty}$ with
asymptotic behaviours
\begin{equation}\label{32}
\lambda_{n}= -\frac{p_1p_2}{p_1+2\sqrt{p_1p_2}+p_2}\pi^{2} n^{2} +
O(n). \ \
\end{equation}
\end{theorem}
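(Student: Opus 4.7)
The plan is to write $\pounds = \pounds_0 + A$ and treat $A$ as a ``small'' perturbation of the piecewise constant-coefficient operator $\pounds_0$, whose spectrum can be computed explicitly; then invoke Lemmas \ref{25} and \ref{27} to transfer the asymptotics from $\pounds_0$ to $\pounds$. The first task is to justify that $\pounds_0$ has real, discrete spectrum. By Theorem \ref{simm} it is densely defined and symmetric, and Corollary \ref{selkff} shows the resolvent $R(\lambda,\pounds_0)$ maps $L_2(-1,0)\oplus L_2(0,1)$ boundedly into $W_2^2(-1,0)\oplus W_2^2(0,1)$; composing with the compact embedding $W_2^2\hookrightarrow L_2$ one obtains that $R(\lambda,\pounds_0)$ is compact, which (combined with Theorem \ref{self}) makes $\pounds_0$ self-adjoint with compact resolvent and hence with purely discrete real spectrum.

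Next I would compute the eigenvalues of $\pounds_0$ directly. On each of $[-1,0)$ and $(0,1]$ the equation $p(x)u''=\lambda u$ has explicit solutions $A\cos(\mu_i x)+B\sin(\mu_i x)$ with $\mu_i^{2}=-\lambda/p_i$ (with the appropriate analytic continuation when $p_i$ and $\lambda$ have the same sign). Imposing (\ref{2})--(\ref{3}) and the transmission conditions (\ref{4})--(\ref{5}) produces a $4\times 4$ linear system whose vanishing determinant gives a characteristic function $\Delta(\lambda)$. Expanding $\Delta(\lambda)$ for large $|\lambda|$ and keeping only the terms that dominate the trigonometric factors, one sees that in Case~1 ($p_1p_2<0$) the determinant factorises asymptotically into a product of two sines, $\sin(\mu_1)\sin(\mu_2)+O(|\lambda|^{-1/2})$, whose zeros yield the two sequences $\lambda_{n,i}=-p_i\pi^{2}n^{2}+O(n)$; in Case~2 ($p_1p_2>0$) both intervals contribute oscillatory factors of the same sign and the characteristic equation reduces to $\sin(\mu_1+\mu_2)=O(|\lambda|^{-1/2})$, giving a single sequence with $\mu_1+\mu_2\sim \pi n$, i.e.\ $\sqrt{-\lambda}\,\bigl(1/\sqrt{p_1}+1/\sqrt{p_2}\bigr)\sim \pi n$, which rearranges to (\ref{32}).

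To transfer this to $\pounds$ I need $A$ to be $p$-subordinate to $\pounds_0$ for some $p<1$. Hypothesis~2 gives $\|Au\|_{L_2}\le C\|u\|_{W_2^{1}}$; the interpolation inequality (\ref{6ks1}) (applied with $E_1=W_2^{2}$, $E_2=L_2$, $\theta=1/2$) yields $\|u\|_{W_2^{1}}\le C'\|u\|_{W_2^{2}}^{1/2}\|u\|_{L_2}^{1/2}$, and Corollary \ref{selkff} together with the bound on the resolvent of a self-adjoint operator gives $\|u\|_{W_2^{2}}\le C''(\|\pounds_0 u\|_{L_2}+\|u\|_{L_2})$ on $D(\pounds_0)$. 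Combining these estimates shows $\|Au\|_{L_2}\le b\|\pounds_0 u\|_{L_2}^{1/2}\|u\|_{L_2}^{1/2}$, i.e.\ $A$ is $\tfrac{1}{2}$-subordinate. Hence Lemma \ref{25} places $\sigma(\pounds)$ in the strip $|\operatorname{Im}\lambda|\le b|\lambda|^{1/2}$ and bounds $|N_{\pm}(\tau,\alpha,\pounds)-N_{\pm}(\tau,\pounds_0)|$ by the number of eigenvalues of $\pounds_0$ in a shell of half-width $O(\tau^{1/2})$ around $\tau$, which from Step~2 is bounded since the eigenvalues of $\pounds_0$ are separated by $\Theta(\sqrt{|\lambda|})$. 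Together with Lemma \ref{27}, this forces each eigenvalue $\lambda_n$ of $\pounds$ to lie within $O(\sqrt{|\lambda_n|})=O(n)$ of the corresponding eigenvalue of $\pounds_0$, producing (\ref{31}) and (\ref{32}).

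The main technical obstacle is Step~2, the expansion of $\Delta(\lambda)$: one must carefully track which exponential factors $e^{\pm i\mu_i}$ dominate in the various sectors of the complex $\lambda$-plane cut out by $p_1p_2\lessgtr 0$, show that the contributions from $\alpha_0,\beta_0,\gamma_i,\delta_i$ enter only as $O(|\lambda|^{-1/2})$ corrections to the principal trigonometric factor, and argue that each asymptotic root of the principal part corresponds to exactly one root of $\Delta(\lambda)$ (by a Rouché-type argument on small disks of radius $O(1)$ around the approximate roots). Once this separation is in place, the perturbation machinery of Section~4 furnishes the claimed $O(n)$ remainder automatically.
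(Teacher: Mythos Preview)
Your overall strategy---show $\pounds_0$ is self-adjoint with compact resolvent, prove $A$ is $\tfrac12$-subordinate to $\pounds_0$ via the interpolation inequality (\ref{6ks1}), and then invoke Lemma~\ref{25} to compare the eigenvalue counting functions---is exactly the paper's. The one substantive difference is your Step~2: the paper does not compute the spectrum of $\pounds_0$ from the characteristic determinant, but simply quotes the asymptotics (\ref{35}) from an external reference \cite{ka}. Your direct computation is a legitimate alternative, though your sketch is imprecise in Case~1: when $p_1p_2<0$ and $\lambda$ is real, one of $\mu_1,\mu_2$ is purely imaginary, so the leading factor is not $\sin\mu_1\sin\mu_2$ but rather $\sin\mu_1$ times an exponential in the other variable (and vice versa on the other half-line), which is what actually produces the two separate sequences.

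Two smaller points. First, Lemma~\ref{27} is not needed here; it is used in the paper only for Theorem~\ref{53}, where $A$ is merely compact relative to $\pounds_0$ and one gets the weaker $o(n^2)$ remainder. For the present theorem Lemma~\ref{25} alone suffices (the paper itself miscites Lemma~\ref{27} at (\ref{42}), but the inequality written there is that of Lemma~\ref{25}). Second, your concluding sentence---that each $\lambda_n(\pounds)$ lies within $O(n)$ of the ``corresponding'' $\lambda_n(\pounds_0)$---is not how the argument is closed in the paper and would require extra justification. The paper instead shows $N_\pm(r,\alpha,\pounds)=N_\pm(r,\pounds_0)+O(1)$, inverts this to get $|\lambda_{n,i}|=|p_i|\pi^2n^2+O(n)$, and then uses the strip estimate $|\mathrm{Im}\,\lambda|\le b|\lambda|^{1/2}$ from Lemma~\ref{25} to separate the real and imaginary parts; you should phrase the final step that way.
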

\begin{proof}Since the embedding $W_{2}^{2}(-1,0) \oplus
W_{2}^{2}(0,1)\subset L_{2}(-1,0) \oplus L_{2}(0,1)$ is compact
(see, \cite{kato}) by virtue of corollary \ref{selkff} the resolvent
operator $R(\lambda,\pounds)$  is  compact in the space $L_{2}(-1,0)
\oplus L_{2}(0,1)$. Consequently spectrum of the operators
$\pounds_0$ and $\pounds$ are discrete. In view of Theorem
\ref{self} the operator $\pounds_0$ is  self-adjoint. At the other
hand, by applying the multiplicative inequality (\ref{6ks1}) we have
\begin{equation}\label{33}
\|u\|_{W_{2}^{1}}\leq C\|u\|_{W_{2}^{2}}^{1/2}\|u\|_{L_{2}}^{1/2}, \
\ \ u\in W_{2}^{1}(-1,0) \oplus W_{2}^{1}(0,1)
\end{equation}
Without loss of generality we shall assume that $\lambda=0$ is not
eigenvalue  of $\pounds_0.$ Otherwise we can find a real value $
\mu_0\notin\sigma(\pounds_0)$ and replace the spectral parameter
$\lambda$ by  $\lambda-\mu_0.$ Applying corollary \ref{selkff}  we
find that
\begin{equation}\label{34}
\|Au\|_{L_{2}}\leq C_1\|u\|_{W_{2}^{1}}\leq
C_2\|u\|_{W_{2}^{1}}^{1/2}\|u\|_{L_{2}}^{1/2}\leq
C_3\|\pounds_0u\|_{L_{2}}^{1/2}\|u\|_{L_{2}}^{1/2}
\end{equation}
for some $C_i=const (i=1,2,3)$ i.e. the operator  $A$ is
$\frac{1}{2}$-subordinate to $\pounds_0$.  Let us find asymptotic
behaviour of $N_{\pm}(r,\pounds_0)$ for $r \rightarrow \pm\infty$.
Consider the case  $p_1p_2<0$. Let $p_1<0, p_2>0$(the other case
$p_1>0, p_2<0$ is totaly similar).  The eigenvalues of the operator
$\pounds_0$ can be arranged as two infinite series
$\{\widetilde{\lambda}_{n,1}\}$ and $\{\widetilde{\lambda}_{n,2}\}$
with asymptotics
\begin{equation}\label{35}
\widetilde{\lambda}_{n,1}=-p_1\pi^{2} n^{2} + O(n), \ \
\widetilde{\lambda}_{n,2}=-p_2\pi^{2} n^{2} + O(n)
\end{equation}
(see, \cite{ka}). Then from (\ref{35}) we can derive easily that
\begin{eqnarray}\label{40}
N_{+}(r,\pounds_0)=\sum_{\widetilde{\lambda}_{n,1} \leq r}
1=\frac{1}{\sqrt{-p_1\pi}}\sqrt{r}+o(1) ,\ \ \ r \rightarrow \infty
\end{eqnarray}
and
\begin{eqnarray}\label{41}
N_{-}(r,\pounds_0)=\sum_{\widetilde{-\lambda}_{n,2} \leq r}
1=\frac{1}{\sqrt{p_2\pi}}\sqrt{r}+o(1) ,\ \ \ r \rightarrow \infty.
\end{eqnarray}
Further, applying Lemma \ref{27}  to the operators $\pounds_0$ and
$A$ we get that there is $b>0$ such that
\begin{eqnarray}\label{42}
|N_{\pm}(r,\alpha,\pounds)-N_{\pm}(r,\alpha,\pounds_0)|&\leq&
C_1(N_{\pm}(r+b\sqrt{r},\pounds_0)-N_{\pm}(r-b\sqrt{r},\pounds_0))\nonumber\\&+&C_2
\end{eqnarray}
for arbitrary $\alpha$  with $0<\alpha<\frac{\pi}{2}$, where $C_1$
and $C_2$  are some constants depending only on $\alpha$. Since
\begin{eqnarray}\label{43}
\sqrt{r+b\sqrt{r}}-\sqrt{r-b\sqrt{r}}=O(1), \ as \
r\rightarrow\infty
\end{eqnarray} from (\ref{41}) and (\ref{42}) it follows
that
\begin{eqnarray}\label{44}
(N_{\pm}(r+b\sqrt{r},\pounds_0)-N_{\pm}(r-b\sqrt{r},\pounds_0))\leq
C, \ \ r\rightarrow\infty
\end{eqnarray}
for some $C>0$. Hence, by virtue of (\ref{43}) for arbitrary
$\alpha$ with $0<\alpha<\frac{\pi}{2}$, there is a constant
$C_\alpha$ such that
\begin{eqnarray}\label{45}
|N_{\pm}(r,\alpha,\pounds)-N_{\pm}(r,\alpha,\pounds_0)|\leq C_\alpha
\end{eqnarray}
Taking in view  the fact that the spectrum of $\pounds_0$ is
discrete and using Corollaries \ref{selkf} and \ref{selkff} we have
that for all $\alpha$, $0<\alpha<\frac{\pi}{2}$, the number of
eigenvalues of $\pounds$ which are lying outside the angle
$\psi_{\alpha}^{\pm}=\{\lambda:|\arg(\pm\lambda)|<\alpha\}$ is
finite. Therefore from (\ref{41}), (\ref{42}) and  (\ref{43}) it
follows that
\begin{eqnarray}\label{46}
N_{+}(r,\frac{\pi}{2},\pounds)=\frac{1}{\sqrt{-p_1\pi}}\sqrt{r}+O(1)
,\ \ \ r \rightarrow \infty
\end{eqnarray}
and
\begin{eqnarray}\label{47}
N_{-}(r,\frac{\pi}{2},\pounds)=\frac{1}{\sqrt{p_2\pi}}\sqrt{r}+O(1)
,\ \ \ r \rightarrow \infty
\end{eqnarray}
Consequently in both left- and right half-plane  the operator
$\pounds$ has infinitely many eigenvalues. Denote by
$\{\lambda_{n,1}\}_{1}^{\infty}$ and
$\{\lambda_{n,2}\}_{1}^{\infty}$ all eigenvalues of operator
$\pounds$, which lies in the right and left half-plane respectively
and  arranged as $|\lambda_{1,i}|\leq|\lambda_{2,i}|\leq...$ (i=1,2)
according counted with their algebraic multiplicity. Then from
(\ref{46}) and (\ref{47}) we have
\begin{equation}\label{48}
|\lambda_{n,i}|=|p_i|\pi^{2} n^{2} + O(n), \ \ \ n \rightarrow
\infty (i=1,2)
\end{equation}
Further, by virtue of Lemma \ref{25}, there is $C>0$ such that
\begin{eqnarray}\label{49}
|Im \lambda_{n,i}|^{2}\leq C |\lambda_{n,i}|, \ (i=1,2)
\end{eqnarray}
and therefore for sufficiently large $n$(in fact, when
$|\lambda_{n,i}|\geq C$) we have
\begin{eqnarray}\label{50}
|Re \lambda_{n,i}|^{2}=|\lambda_{n,i}|^{2}-|Im
\lambda_{n,i}|^{2}\geq|\lambda_{n,i}|^{2}-C|\lambda_{n,i}|\geq(|\lambda_{n,i}|-C)^{2},
\ \ C=const
\end{eqnarray}
Consequently,
\begin{eqnarray}\label{51}
|Re \lambda_{n,i}|=|p_i|\pi^{2} n^{2} + O(n), \ \textrm{and }\ |Im
\lambda_{n,i}|=O(n)
\end{eqnarray}
i.e.
\begin{equation}\label{52}
\lambda_{n,i}=-p_i\pi^{2} n^{2} + O(n), \ \ (i=1,2)
\end{equation}
The proof for the case $p_1p_2>0$ is totally similar. The proof is
complete.
\end{proof}
\begin{theorem}\label{53} Let the condition 1. of Theorem \ref{30}
be satisfied and let the operator $A$ from $W_{2}^{2}(-1,0) \oplus
W_{2}^{2}(0,1)$ into $L_{2}(-1,0) \oplus L_{2}(0,1)$ acts compactly.

Case1. If $p_1p_2<0$ then the eigenvalues of the problem
(\ref{1})-(\ref{5}) can be arranged as two sequence
$\{\lambda_{n,1}\}_{1}^{\infty}$ and
$\{\lambda_{n,2}\}_{1}^{\infty}$ with asymptotics
\begin{equation}\label{54}
\lambda_{n,1}=-p_1\pi^{2} n^{2} + o(n^{2}), \ \
\lambda_{n,2}=-p_2\pi^{2} n^{2} + o(n^{2})
\end{equation}
Case2. If $p_1p_2>0$ then the eigenvalues of the problem
(\ref{1})-(\ref{5}) can be arranged  as one sequence
$\{\lambda_{n}\}_{1}^{\infty}$ with asymptotics
\begin{equation}\label{55}
\lambda_{n}= -\frac{p_1p_2}{p_1+2\sqrt{p_1p_2}+p_2}\pi^{2} n^{2} +
o(n^{2}). \ \
\end{equation}
\end{theorem}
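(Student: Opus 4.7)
The plan is to mirror the proof of Theorem \ref{30}, replacing the quantitative $\tfrac{1}{2}$-subordination of $A$ to $\pounds_0$ used there by the qualitative $\pounds_0$-compactness now available, and correspondingly replacing Lemma \ref{25} by Lemma \ref{27}. This trades the sharp $O(n)$ remainder for the weaker $o(n^{2})$ remainder, but leaves the overall skeleton of the argument intact.

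First I would recall the structural ingredients that are independent of the subordination exponent: by Corollary \ref{selkff} the resolvent $R(\lambda,\pounds)$ is compact in $L_{2}(-1,0)\oplus L_{2}(0,1)$, hence $\pounds$ has discrete spectrum; by Theorem \ref{self} the operator $\pounds_0$ is self-adjoint; and the pure-differential eigenvalues of $\pounds_0$ admit the asymptotics $\widetilde\lambda_{n,i}=-p_{i}\pi^{2}n^{2}+O(n)$ (as cited from \cite{ka}). In the case $p_{1}p_{2}<0$, say $p_{1}<0<p_{2}$, this produces
$$N_{+}(r,\pounds_0)=\frac{\sqrt{r}}{\pi\sqrt{|p_{1}|}}+O(1),\qquad N_{-}(r,\pounds_0)=\frac{\sqrt{r}}{\pi\sqrt{|p_{2}|}}+O(1),$$
and the case $p_{1}p_{2}>0$ is analogous with a single asymptotic series.

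The heart of the proof is the verification and application of Lemma \ref{27}. Compactness of $A:W_{2}^{2}(-1,0)\oplus W_{2}^{2}(0,1)\to L_{2}(-1,0)\oplus L_{2}(0,1)$, combined with the boundedness of $R(\mu_0,\pounds_0):L_{2}\to W_{2}^{2}$ (elliptic regularity for the piecewise-constant Sturm--Liouville part), shows that $AR(\mu_0,\pounds_0)$ is compact on $L_{2}$ for any regular value $\mu_0$ of $\pounds_0$, that is, $A$ is compact with respect to $\pounds_0$. The regularity hypothesis $\lim_{r\to\infty,\,\varepsilon\to 0}N_{\pm}(r(1+\varepsilon),\pounds_0)/N_{\pm}(r,\pounds_0)=1$ is immediate from the $\sqrt{r}$ growth displayed above. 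Lemma \ref{27}, applied separately to the positive and negative half-lines, then gives for every fixed $\alpha\in(0,\pi/2)$
$$N_{\pm}(r,\alpha,\pounds)=N_{\pm}(r,\pounds_0)(1+o(1))=\frac{\sqrt{r}}{\pi\sqrt{|p_{i}|}}(1+o(1)),\qquad r\to\infty.$$

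Finally I would convert these counting-function asymptotics into the claimed eigenvalue asymptotics. By Corollary \ref{selkf} only finitely many eigenvalues of $\pounds$ lie outside $\psi_{\alpha}^{+}\cup\psi_{\alpha}^{-}$ for each fixed $\alpha>0$; combined with the ratio just obtained and letting $\alpha\downarrow 0$ along a sequence, this forces $\arg\lambda_{n,1}\to 0$, $\arg\lambda_{n,2}\to\pi$ and $|\lambda_{n,i}|=|p_{i}|\pi^{2}n^{2}(1+o(1))$ once the eigenvalues are ordered by modulus in each half-plane. Writing $\lambda_{n,i}=|\lambda_{n,i}|e^{i\theta_{n,i}}$ with $\theta_{n,i}\to 0$ (respectively $\theta_{n,i}\to\pi$) then yields $\mathrm{Re}\,\lambda_{n,i}=-p_{i}\pi^{2}n^{2}+o(n^{2})$ and $\mathrm{Im}\,\lambda_{n,i}=o(n^{2})$, which is exactly (\ref{54}); the $p_{1}p_{2}>0$ case is identical. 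The main technical hurdle, and the very reason for the weaker $o(n^{2})$ remainder, is this last step: unlike in Theorem \ref{30} we cannot invoke the parabolic strip bound $|\mathrm{Im}\,\lambda|\leq b|\lambda|^{p}$ of Lemma \ref{25} (which requires $p$-subordination, strictly stronger than $\pounds_0$-compactness), and control of the imaginary parts must come entirely from the qualitative sector concentration supplied by Lemma \ref{27} with $\alpha\downarrow 0$.
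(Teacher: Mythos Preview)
Your proposal is correct and follows essentially the same route as the paper's own proof: both rely on the self-adjointness and eigenvalue asymptotics of $\pounds_0$ established in Theorem~\ref{30}, verify the regularity hypothesis $\lim N_{\pm}(r(1+\varepsilon),\pounds_0)/N_{\pm}(r,\pounds_0)=1$ from the $\sqrt{r}$ behaviour of the counting function, apply Lemma~\ref{27} to obtain $N_{\pm}(r,\alpha,\pounds)=N_{\pm}(r,\pounds_0)(1+o(1))$, and then use Corollary~\ref{selkf} (sector concentration) to pass from $|\lambda_{n,i}|=|p_i|\pi^{2}n^{2}(1+o(1))$ to the real/imaginary splitting. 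Your write-up is in fact a bit more careful than the paper's in two places: you explicitly check that $A$ is $\pounds_0$-compact via the factorization $AR(\mu_0,\pounds_0)$, and you articulate clearly why the absence of the parabolic strip bound from Lemma~\ref{25} forces the weaker $o(n^{2})$ remainder.
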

\begin{proof}We are already shown in the proof the Theorem \ref{30},
that the operator $\pounds_0$ is self-adjoint with discrete spectrum
and for the eigenvalues of this operator the asymptotic formulas
(\ref{31})(for $p_1p_2<0$ ) and  (\ref{32}) (for $p_1p_2>0$) are
hold. At the other and, by virtue of the Corollary \ref{selkff} the
operator $R(\lambda,\pounds_0)$ is compact. Let us consider the case
$p_1<0$, $p_2>0$. From (\ref{40}) and (\ref{41}) it follows that
\begin{eqnarray}\label{56}
\lim_{\begin{array}{c}
        r \rightarrow \infty \\
        \varepsilon \rightarrow 0 \\
      \end{array}}
\ \frac{N_{\pm} \left(r(1 + \varepsilon),\pounds_0
\right)}{N_{\pm}(r, \pounds_0)}
 &=&1.
\end{eqnarray}
By virtue of the lemma \ref{27}, from (\ref{40}) and (\ref{41}) it
follows that
\begin{eqnarray}\label{57}
N_{\pm}(r,\alpha,\pounds)=N_{\pm}(r,\alpha,\pounds_0)+o(\sqrt{r}), \
r \rightarrow \infty
\end{eqnarray}
for all $\alpha$($0<\alpha<\frac{\pi}{2}$).Taking in view the
Corollary \ref{selkf} we see that the relation (\ref{57}) is
equivalent to the following asymptotic relation
\begin{equation}\label{58}
|\lambda_{n,i}|=|p_i|\pi^{2} n^{2} + O(n^{2}), \ \ \ n \rightarrow
\infty (i=1,2)
\end{equation}
Further from Corollary \ref{selkf} it follows that for all
$\alpha$($0<\alpha<\frac{\pi}{2}$) there is natural number
$n_{\alpha}$ such that for all $n\geq n_{\alpha}$
\begin{eqnarray}\label{59}
\frac{|Re \lambda_{n,i}|}{| \lambda_{n,i}|}>\cos\alpha, \ \
\frac{|Im \lambda_{n,i}|}{| \lambda_{n,i}|}<\sin\alpha \ \ (i=1,2)
\end{eqnarray}
Consequently
\begin{eqnarray}\label{60}
\lim\limits_{n \rightarrow \infty }\frac{|Re \lambda_{n,i}|}{|
\lambda_{n,i}|}=1, \ \ \lim\limits_{n \rightarrow \infty }\frac{|Im
\lambda_{n,i}|}{| \lambda_{n,i}|}=0 \ \ (i=1,2)
\end{eqnarray}
This means that
\begin{eqnarray}\label{61} |Re
\lambda_{n,i}|=|p_i|\pi^{2} n^{2} + O(n), \ |Im \lambda_{n,i}|=O(n)
\end{eqnarray}
i.e.
\begin{equation}\label{62}
\lambda_{n,i}=-p_i\pi^{2} n^{2} + O(n), \ \ (i=1,2)
\end{equation}
The proof is complete.
\end{proof}

\section{The Abel basis  of root functions of the problem
(\ref{1})-(\ref{5})} Let $\mathcal{H}$ be a separable Hilbert space
and $\mathcal{S}$ a unbounded closed linear operator acting in this
space with a dence domain $D(\mathcal{S})$. Assume that the spectrum
of $\mathcal{S}$ is discrete and $\{\lambda_j\}(j=1\div\infty)$ its
eigenvalues which arranged as $|\lambda_1|\leq|\lambda_2|\leq....$
Denote by $m_j$ the dimension of root lineal $M_{\lambda_{j}}$ and
let $f_1^{j}, f_2^{(j)},...,f_{m_{(j)}}^{(j)} $ be any orthonormal
basis of this root lineal. Let $\epsilon_j>0$ any real numbers, so
that $\epsilon_j<\min_{i\neq j}|\lambda_i-\lambda_j|$. Obviously the
contour $|\lambda_i-\lambda_j|=\epsilon_j$ surrounds only one
eigenvalue(namely the eigenvalue $\lambda_j$) It is known that (see,
\cite{kato}) the range of the projection operator
$P_{\lambda_{k}}(S)$ defined as
$$P_{\lambda_{k}}(S):=-\frac{1}{2\pi i}\oint\limits_{|\lambda-\lambda_k|=\epsilon_k}(\lambda I-\mathcal{S})^{-1}d\lambda$$
is contained in the root lineal $M_{\lambda_{j}}(S)$ and can be
represented as
$$P_{\lambda_{k}}(S)f=\sum_{i=1}^{m_k}c_i^{(k)}f_i^{(k)}$$
for each $f\in\mathcal{H}$. Under above assumptions the series(not
necessarily convergent)
$$f\sim\sum_{j=1}^{\infty}(-\frac{1}{2\pi i}\oint\limits_{|\lambda-\lambda_j|=\epsilon_j}(\lambda I-\mathcal{S})^{-1}fd\lambda)=\sum_{j=1}^{\infty}(\sum_{i=1}^{m_j}c_i^{(j)}f_i^{(j)})$$
is said to be a formal expansion of the vector $f\in\mathcal{H}$ in
the series of root vectors of $\mathcal{S}$. Let $\theta$ and
$\alpha$ any real positive numbers such that $\theta<\frac{\pi}{2}$
and $\alpha<\frac{\pi}{2\theta}$. Assume that the eigenvalues
$\lambda_j$(without, at least, finite number) of the operator
$\mathcal{S}$ are contained in the angle
$\chi_{\theta}=\{\lambda\in\mathbb{C}||arg\lambda|<\theta\}$. Then
for $\lambda^{\alpha}$ in this angle we mean
$\lambda^{\alpha}:=|\lambda|^{\alpha}e^{i\alpha arg\lambda}$.
Consequently for each constant $t>0$ the function
$|e^{-\lambda^{\alpha}t}|$ exponentially tends to zero in the angle
$\chi_{\theta}$ for $|\lambda|\rightarrow\infty$. If
$$\lim\limits_{t\rightarrow+0}\| f-\sum_{j=1}^{\infty}(-\frac{1}{2\pi i}\oint\limits_{|\lambda-\lambda_j|
=\epsilon_j}e^{-\lambda_{j}^{\alpha}t}(\lambda I-\mathcal{S})^{-1}fd\lambda)\|=0$$
then the system of root vectors of $\mathcal{S}$ is said to be an
Abel basis of order $\alpha$ where for
$\lambda_k\notin\chi_{\theta}$ the expression
$e^{-\lambda_{k}^{\alpha}t}$ is replace by $1$.
\begin{theorem}\label{dok}\cite{mar} If $\mathcal{S}$ is self-adjoint operator with
discrete spectrum in the Hilbert space and
\begin{equation}\label{kos}\liminf(N(r,R,\mathcal{S})/r^{s})<\infty
\end{equation} for some $s>0$ and if $B$ is p-subordinate to
$\mathcal{S}(0\leq p<1)$, then for each $\alpha>\max\{s-p+1,0\}$ the
system of root vectors of the operator $\mathcal{S}+B$ forms an Abel
basis of order $\alpha$ in the Hilbert space $\mathcal{H}$.
\end{theorem}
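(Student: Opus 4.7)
The plan is to compare the Abel-summed series for $T=\mathcal{S}+B$ with the corresponding series for the self-adjoint $\mathcal{S}$ at the level of resolvents, and close the argument using the spectral theorem for $\mathcal{S}$. The natural starting point is the second resolvent identity
$$R(\lambda,T)=R(\lambda,\mathcal{S})\bigl[I-BR(\lambda,\mathcal{S})\bigr]^{-1},$$
valid whenever $\|BR(\lambda,\mathcal{S})\|<1$. By self-adjointness one has $\|R(\lambda,\mathcal{S})\|=1/\mathrm{dist}(\lambda,\sigma(\mathcal{S}))$ and $\|\mathcal{S} R(\lambda,\mathcal{S})\|\le 1+|\lambda|\,\|R(\lambda,\mathcal{S})\|$. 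Applying the $p$-subordination inequality $\|Bu\|\le b\|\mathcal{S} u\|^{p}\|u\|^{1-p}$ to $u=R(\lambda,\mathcal{S})v$ then yields, on any closed sector that stays away from the real axis,
$$\|BR(\lambda,\mathcal{S})\|\le b\,\|\mathcal{S} R(\lambda,\mathcal{S})\|^{p}\|R(\lambda,\mathcal{S})\|^{1-p}=O(|\lambda|^{p-1}),$$
and hence $\|R(\lambda,T)-R(\lambda,\mathcal{S})\|=O(|\lambda|^{p-2})$ there.

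Next I would choose an expanding family of contours $\Gamma_{n}$ that simultaneously avoid $\sigma(\mathcal{S})$ and $\sigma(T)$. This is where the counting hypothesis enters: $\liminf N(r,\mathbb{R},\mathcal{S})/r^{s}<\infty$ guarantees a subsequence of radii $r_{n}\to\infty$ along which one can place circles $|\lambda|=r_{n}$ keeping distance at least $C r_{n}^{-s}$ from $\sigma(\mathcal{S})$, so that $\|R(\lambda,\mathcal{S})\|=O(r_{n}^{s})$ on $\Gamma_{n}$; by the resolvent comparison above the same bound persists for $R(\lambda,T)$.

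For $f$ in a dense subset of $\mathcal{H}$ (taking $f$ in the range of a sufficiently high power of $R(\lambda_{0},\mathcal{S})$ so that its spectral expansion for $\mathcal{S}$ is absolutely convergent) I would then write
$$f-\sum_{|\lambda_{j}|\le r_{n}}e^{-\lambda_{j}^{\alpha}t}P_{\lambda_{j}}(T)f=-\frac{1}{2\pi i}\oint_{\Gamma_{n}}e^{-\lambda^{\alpha}t}R(\lambda,T)f\,d\lambda,$$
together with the corresponding formula for $\mathcal{S}$. The spectral theorem gives that the Abel sum for $\mathcal{S}$ tends to $f$ as $t\to 0^{+}$, so it suffices to control the difference of the two contour integrals, which by the preceding estimates is $O(r_{n}^{s+p-2}\cdot r_{n})$ times $\sup_{\Gamma_{n}}|e^{-\lambda^{\alpha}t}|$, and to show that the tail of each series beyond $r_{n}$ is uniformly small.

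The threshold $\alpha>\max\{s-p+1,0\}$ is exactly what the bookkeeping requires: balancing resolvent growth $O(|\lambda|^{s})$, perturbation remainder $O(|\lambda|^{p-2})$ and arclength $O(|\lambda|)$ against the exponential $|e^{-\lambda^{\alpha}t}|$ leads to integrals of the form $\int r^{s-p+1-\alpha}e^{-r^{\alpha}t\cos(\alpha\theta)}\,dr$, which are finite and tend to $0$ with $t$ precisely when $\alpha>s-p+1$. The main obstacle, and the step demanding genuine care, is the simultaneous construction of the contours $\Gamma_{n}$ keeping the prescribed distance from $\sigma(\mathcal{S})$ (hence from $\sigma(T)$) and the verification that the contour integrals vanish \emph{uniformly} in $t$ near $0^{+}$; once that is accomplished the remaining arguments are routine Phragm\'en--Lindel\"of type estimates, and the dense subset argument extends the conclusion to arbitrary $f\in\mathcal{H}$.
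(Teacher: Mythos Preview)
The paper does not prove this theorem at all: it is stated with the citation \cite{mar} attached and is quoted from Markus's monograph as a ready-made tool, with no argument supplied. There is therefore nothing in the paper's own text to compare your proposal against.

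For what it is worth, your sketch is in the spirit of the Lidskii--Markus argument, but one quantitative slip would derail the exponent bookkeeping if you tried to carry it out. From $N(r,\mathbb{R},\mathcal{S})\le Cr^{s}$ a pigeonhole on $[r,2r]$ produces spectral gaps of width at least $c\,r^{1-s}$, not $c\,r^{-s}$; hence on the good circles one gets $\|R(\lambda,\mathcal{S})\|=O(r^{s-1})$ rather than $O(r^{s})$. This is precisely the correction needed for the threshold $\alpha>s-p+1$ to emerge. You should also note that your remainder estimate $\|R(\lambda,T)-R(\lambda,\mathcal{S})\|=O(|\lambda|^{p-2})$ is only valid on sectors bounded away from the real axis; on the short arcs of $\Gamma_{n}$ near $\mathbb{R}$ one must use the gap bound directly, and combining the two regimes is where the actual work in Markus's proof lies. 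The remaining ingredients you list---the dense-subset reduction, comparison with the self-adjoint Abel sum via the spectral theorem, and uniform control as $t\to0^{+}$---are indeed the main components of that proof.
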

By using this theorem and the Theorem \ref{53} we shall prove the
next result.
\begin{theorem}Let the following conditions be satisfied:

1. $p_1\delta_{0}=p_2\gamma_{1}.$

2. The operator $A$ acted boundedly from the Hilbert space
$W_{2}^{1}(-1,0) \oplus W_{2}^{1}(0,1)$ into the Hilbert space
$L_{2}(-1,0) \oplus L_{2}(0,1)$.

3. Then the system of root functions of the main problem
(\ref{1})-(\ref{5}) forms an Abel basis of order $\alpha$ in the
Hilbert space $L_{2}(-1,0) \oplus L_{2}(0,1)$.
\end{theorem}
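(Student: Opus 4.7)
The plan is to apply Theorem \ref{dok} directly, with $\mathcal{S} := \pounds_0$ and $B := A$, so that $\mathcal{S}+B = \pounds$. By Remark \ref{yukl}, the root functions of the boundary-transmission problem (\ref{1})--(\ref{5}) are precisely the root vectors of $\pounds$, so if I can verify the three hypotheses of Theorem \ref{dok} --- namely that $\pounds_0$ is self-adjoint with discrete spectrum, that $A$ is $p$-subordinate to $\pounds_0$ for some $0\le p<1$, and that $\liminf_{r\to\infty} N(r,\mathbb{R},\pounds_0)/r^{s}<\infty$ for some $s>0$ --- the Abel-basis conclusion follows at once.

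The first of these is already in hand: condition 1 combined with Theorem \ref{self} makes $\pounds_0$ self-adjoint, while Corollary \ref{selkff} combined with the compactness of the embedding $W_{2}^{2}(-1,0)\oplus W_{2}^{2}(0,1)\subset L_{2}(-1,0)\oplus L_{2}(0,1)$ makes the resolvent of $\pounds_0$ compact, hence the spectrum is discrete. For the second hypothesis I would reproduce verbatim the subordination chain of (\ref{34}) in the proof of Theorem \ref{30}: condition 2 yields $\|Au\|_{L_{2}}\le C\|u\|_{W_{2}^{1}}$; the multiplicative inequality (\ref{6ks1}) gives $\|u\|_{W_{2}^{1}}\le C'\|u\|_{W_{2}^{2}}^{1/2}\|u\|_{L_{2}}^{1/2}$; and after, if necessary, replacing $\lambda$ by $\lambda-\mu_0$ for some real $\mu_0\notin\sigma(\pounds_0)$ in order to arrange $0\notin\sigma(\pounds_0)$, Corollary \ref{selkff} converts $\|u\|_{W_{2}^{2}}$ into $\|\pounds_0 u\|_{L_{2}}$. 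Chaining these inequalities produces $\|Au\|_{L_{2}}\le C''\|\pounds_0 u\|_{L_{2}}^{1/2}\|u\|_{L_{2}}^{1/2}$, which is exactly (\ref{24}) with $p=\frac{1}{2}$. The third hypothesis is obtained from the asymptotics of $\pounds_0$: the formulas (\ref{35}) (which rest on \cite{ka} and involve only the structure of $\pounds_0$, not the compactness of $A$) show that the eigenvalues $\widetilde{\lambda}_{n,i}$ of $\pounds_0$ satisfy $|\widetilde{\lambda}_{n,i}|=|p_i|\pi^{2}n^{2}+O(n)$, so $N(r,\mathbb{R},\pounds_0)=O(\sqrt{r})$ as $r\to\infty$, and (\ref{kos}) holds with $s=\frac{1}{2}$.

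With $s=p=\frac{1}{2}$, the threshold $\max\{s-p+1,0\}$ in Theorem \ref{dok} equals $1$. Consequently, for every $\alpha>1$ the system of root vectors of $\pounds=\pounds_0+A$ --- equivalently, by Remark \ref{yukl}, the root functions of the problem (\ref{1})--(\ref{5}) --- forms an Abel basis of order $\alpha$ in $L_{2}(-1,0)\oplus L_{2}(0,1)$, which is the desired conclusion. The main obstacle is not any single delicate estimate but rather the bookkeeping: one must notice that the present weaker hypothesis $A:W_{2}^{1}\to L_{2}$ (in contrast to $A:W_{2}^{2}\to L_{2}$ in Theorem \ref{53}) still delivers $\frac{1}{2}$-subordination, precisely because the multiplicative inequality (\ref{6ks1}) trades a factor of $W_{2}^{2}$ for a factor of $L_{2}$ at exponent $\frac{1}{2}$, which matches exactly the gap between $W_{2}^{1}$ and $W_{2}^{2}$ in the subordination chain.
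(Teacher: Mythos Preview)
Your proposal is correct and follows essentially the same route as the paper: verify that $\pounds_0$ is self-adjoint with discrete spectrum, reuse the chain (\ref{34}) from Theorem \ref{30} to get $\tfrac12$-subordination of $A$ to $\pounds_0$, read off $N(r,\mathbb{R},\pounds_0)=O(\sqrt{r})$ from the asymptotics (\ref{35})/(\ref{40})--(\ref{41}) so that (\ref{kos}) holds with $s=\tfrac12$, and then invoke Theorem \ref{dok}. Your write-up is in fact more explicit than the paper's (you spell out the threshold $\alpha>\max\{s-p+1,0\}=1$, which the paper leaves implicit), but the argument is the same.
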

\begin{proof} Consider the case $p_1<0, p_2>0$(the other cases are
similar). Then from (\ref{40}) and (\ref{41}) it follows that the
condition (\ref{kos}) is satisfied for $s=\frac{1}{2}$.
Moreover,similarly to  the proof of the Theorem \ref{30} we can
prove that the operator $A$ is $\frac{1}{2}$ -subordinate to
$\pounds_0$. Consequently, it is enough to apply the Theorem
\ref{dok} to the operators $\pounds_0$ and $A$ to complete the
proof.
\end{proof}
\begin{remark}It is known that the property of a system of root
vectors to form an Abel basis of some order $\alpha>0$ is immediate
between the completeness of root vectors and a basis with
parentheses. Note that the concept of an Abel basis was first
introduced in \cite{yak}.
\end{remark}

\vskip 1truecm
\end{document}